\documentclass[bj]{amsart}
\usepackage{amssymb,mathrsfs,bm,enumerate}
\usepackage{amsfonts,dsfont,
  amsaddr,color}
\usepackage{verbatim}

\usepackage{txfonts}

\AtBeginDocument{\mathcode`v=\varv}
\AtBeginDocument{\mathcode`w=\varw}

\makeatletter
\textheight=660pt\textwidth=440pt\oddsidemargin=10pt\evensidemargin=10pt\topmargin=4pt

\newtheorem{theorem}{Theorem}[section]
\newtheorem{lemma}[theorem]{Lemma}
\newtheorem{corollary}[theorem]{Corollary}

\theoremstyle{definition}
\newtheorem{remark}[theorem]{Remark}

\allowdisplaybreaks \numberwithin{equation}{section}

\newbox\ovlbox
\def\ovl#1{\setbox\ovlbox\hbox{$#1$}\rlap{\kern.5\wd\ovlbox\kern-1.5pt
    $\overline{\hbox to4pt{\hss$\phantom{#1}$\hss}}$\hss}#1}
\def\unl#1{\setbox\ovlbox\hbox{$#1$}\rlap{\kern.5\wd\ovlbox\kern-2.5pt
    $\underline{\hbox to4pt{\hss$\phantom{#1}$\hss}}$\hss}#1}

\newcommand{\eps}{\varepsilon} \newcommand\1{\hbox{\kern.375em\vrule
    height1.57ex depth-.1ex width.05em\kern-.375em \rm 1}}
\newcommand\mequal{\overset{\text{\tiny m}}{=}}

 \newcommand\E{\mathbb{E}}
 
 \newcommand\R{\mathbb{R}}

 \def\tr{{\operatorname{tr}}\kern 0.08em}

\def\Ric{{\operatorname{Ric}}} \def\Hess{{\operatorname{Hess}}}

\def\mathpal#1{\mathop{\mathchoice{\text{\rm #1}}%
    {\text{\rm #1}}{\text{\rm #1}}%
    {\text{\rm #1}}}\nolimits} \def\id{{\mathpal{id}}}

\newcommand{\ptr}{/\!/}

\def\bd{{\bf d}} \def\r{\right} \def\l{\left} \def\e{\operatorname{e}}

\makeatother

\begin{document}

\title[Bismut-Stroock Hessian formulas] {Bismut-Stroock Hessian
  formulas and local Hessian estimates for heat semigroups and
  harmonic functions on Riemannian manifolds}

\author{Qin-Qian Chen\textsuperscript{1},\quad Li-Juan
  Cheng\textsuperscript{1},\quad Anton Thalmaier\textsuperscript{2}}

\address{\textsuperscript{1}Department of Applied Mathematics, Zhejiang University of Technology\\
  Hangzhou 310023, The People's Republic of China}
\address{\textsuperscript{2} Department of Mathematics, University of
  Luxembourg, Maison du Nombre,\\
  L-4364 Esch-sur-Alzette, Luxembourg}
 
\email{chenqq@mail.zjut.edu.cn} \email{chenglj@zjut.edu.cn}
\email{anton.thalmaier@uni.lu}

\begin{abstract}
  In this article, we develop a martingale approach to localized
  Bismut-type Hessian formulas for heat semigroups on Riemannian
  manifolds. Our approach extends the Hessian formulas established by
  Stroock (1996) and removes in particular the compact manifold
  restriction. To demonstrate the potential of these formulas, we give
  as application explicit quantitative local estimates for the Hessian
  of the heat semigroup, as well as for harmonic functions on regular
  domains in Riemannian manifolds.
\end{abstract}

\keywords{Brownian motion; heat semigroup; harmonic function; Hessian
  estimate} \subjclass[2020]{58J65, 60J60, 53C21} \date{\today}

\maketitle

\section{Introduction\label{s1}}

Let $M$ be a complete Riemannian manifold of dimension $n$ with
Levi-Civita connection $\nabla$. We consider the operator
$L=\frac{1}{2}\Delta$ where $\Delta:=\tr\,\nabla \bd $ is the
Laplace-Beltrami operator on $M$.  Denote by $X{.}(x)$ a Brownian
motion on $M$ starting at $x\in M$ with generator $L$ and explosion
time $\zeta(x)$. The explosion time is the random time at which the
process leaves all compact subsets of $M$. Furthermore, let $B_t$ be
the stochastic anti-development of $X{.}(x)$ which is a Brownian
motion on $T_xM$, stopped at the lifetime $\zeta(x)$ of $X{.}(x)$, and
denote by $P_tf$ the associated minimal heat semigroup, acting on
bounded measurable functions $f$, which is represented in
probabilistic terms by the formula
\begin{align*}
  (P_tf)(x)=\E\big[f(X_t(x))\1_{\{t< \zeta(x)\}}\big].
\end{align*}
Recall that for any $T>0$ fixed, $(t,x)\mapsto (P_tf)(x)$ is smooth
and bounded on ${(0,T]}\times M$ with $P_0f=f$.

Let $Q_t\colon T_xM\rightarrow T_{X_t}M$ be defined by
\begin{align*}
  DQ_t=-\frac{1}{2}\Ric^{\sharp}(Q_t)\,dt, \quad  Q_0=\id,
\end{align*}
where $D:=\ptr_t\,d \,\ptr_t^{-1}$ and
$\ptr_t:=\ptr_{0,t}: T_{x}M \rightarrow T_{X_{t}(x)}M$ denotes
parallel transport along $X(x)$.  The following formula is taken from
\cite{Thalmaier97} and gives a typical probabilistic derivative
formula for the semigroup $P_tf$; it extends earlier formulas of
Elworthy-Li \cite{EL94}.  Results in this direction are centered
around Bismut's integration by parts formula \cite{Bismut}. To be
precise, let $D$ be a fixed regular domain in $M$ and $\tau_D$ be the
first exit time of $X{.}(x)$ from $D$ when started at $x\in D$.  A
regular domain is by definition an open, relatively compact connected
domain in $M$ with non-empty boundary.  Then for $v\in T_xM$,
\begin{align*}
  \nabla P_tf(x)=\E\left[f(X_t(x)) \1_{\{t<\zeta(x)\}} \int_0^{\tau_D \wedge t} \langle Q_t(\dot{k}(s)v), \ptr_s \,d  B_s \rangle\right],
\end{align*}
where the function $k\in L^{1,2}(\R^+, [0,1])$ satisfies the property
that $k(0)=0$ and $k(s)=1$ for all $s\geq \tau_D\wedge t$.  An
explicit choice of the random function $k$ in the above Bismut formula
can be used to derive sharp gradient estimates in various situations,
e.g. for heat semigroups, as well as for harmonic functions on regular
domains in Riemannian manifolds, see \cite{TW98, Wbook1} for explicit
results.  Recently, such formulas have also been applied to
quantitative estimates for the derivative of a $C^2$ function $u$ in
terms of local bounds on $u$ and $\Delta u$ \cite{CTT18}.

A similar approach has been used by Arnaudon, Driver and Thalmaier
\cite{ADT2007} to approach Cheng-Yau type inequalities for the
harmonic functions.  Actually, the effect of curvature on the
behavior of harmonic functions on a Riemannian manifold $M$ is a
classical problem (see e.g. \cite{Yau75,Schoen96}), see also
\cite{CJKS-2020} for characterizations of gradient estimates in the
context of metric measure spaces.  In this paper, we aim to clarify in
particular the effect of curvature on the Hessian of harmonic
functions on regular domains in Riemannian manifolds.  To this end, we
establish a local version of a Bismut type formula for $\Hess P_tf$
which is consistent with Stroock's formula in \cite{Stroock} while
here we follow a martingale approach and allow the manifold to be
non-compact.

We start by giving some background on Bismut type formulas for second
order derivatives of heat semigroups. A first such formula appeared in
Elworthy and Li \cite{EL94,Li} for a non-compact manifold, however
under strong curvature restrictions.  An intrinsic formula for
$\Hess P_tf$ was given by Stroock \cite{Stroock} for a compact
Riemannian manifold, while a localized intrinsic formula was obtained
by Arnaudon, Planck and Thalmaier \cite{APT} adopting a martingale
approach.  A localized version of the Hessian formula (still with
doubly stochastic damped parallel translations) for the Feynman-Kac
semigroup was derived by Thompson \cite{Thompson2019}.  The study of
the Hessian of a Feynman-Kac semigroup generated by a Schr\"odinger
operator of the type $\Delta+V$ where $V$ is a potential, has been
pushed forward by Li \cite{Li2016,Li2018}.  Very recently, Bismut-type
Hessian formulas have been used for new applications, see for instance
Cao-Cheng-Thalmaier \cite{CCT} where $L^p$ Calder\'{o}n-Zygmund
inequalities on Riemannian manifolds have been established under
natural geometric assumptions for indices~$p>1$.

The following formula is taken from \cite{APT} and gives a localized
intrinsic Bismut type formula for the Hessian of the heat semigroup.
For $v, w\in T_xM$ define an operator-valued process
$W_t(v, w):T_xM\rightarrow T_{X_t}M$ as solution to the covariant
It\^{o} equation
\begin{align*}D W_t(v, w)=R\big(\ptr_td B_t,
  Q_t(v)\big)Q_t(w)-\frac{1}{2}(\bd^*R+\nabla \Ric)^{\sharp}(Q_t(v),
  Q_t(w))\,d t-\frac{1}{2}\Ric^{\sharp}(W_t(v, w))\,dt
\end{align*}
with initial condition $W_0(v,w)=0$. Here $R$ denotes the Riemann
curvature tensor.  It is easy to check that
$$W_t(v,w)=Q_t\int_0^t Q_r^{-1}\l(R\big(\ptr_r d B_r,
Q_r(v)\big)Q_r(w)-\frac{1}{2}(\bd^*R+\nabla \Ric)^{\sharp}(Q_r(v),
Q_r(w))\,d r\r).$$ The operator $\bd^*R$ is defined by
$\bd^*R(v_1)v_2:=-\tr\nabla{.}R(\cdot, v_1)v_2$ and satisfies
\begin{align*}
  \langle \bd^*R(v_1)v_2,v_3\rangle=\langle(\nabla_{v_3}\Ric^{\sharp})(v_1),v_2 \rangle-\langle (\nabla_{v_2}\Ric^{\sharp})(v_3),v_1 \rangle
\end{align*}
for all $v_1,v_2,v_3\in T_xM$ and $x\in M$.  Let $v,w\in T_xM$ with
$x\in D$, $f\in \mathcal{B}_b(M)$ and fix $0<S<T$.  Suppose that $D_1$
and $D_2$ are regular domains such that $x\in D_1$ and
$\bar{D}_1\subset D_2\subset D$. Let $\sigma$ and $\tau$ be two
stopping times satisfying
$0<\sigma\leq \tau_{D_1}<\tau\leq \tau_{D_2}$.  Assume $k,\ell$ are
bounded adapted processes with paths in the Cameron-Martin space
$L^{1,2}([0,T]; [0,1])$ such that
\begin{itemize}
\item $k(0)=1$ and $k(s)=0$ for $s\geq \sigma \wedge S$;
\item $\ell(s)=1$ for $s\leq \sigma \wedge S$ and $\ell(s)=0$ for
  $s\geq \tau \wedge T$.
\end{itemize}
Let $\Hess=\nabla \bd$.  Then for $f\in \mathcal{B}_b(M)$, we have
\begin{align}
  (\Hess_x P_{T}f)(v,v)=
  &-\E^x\l[f(X_T)\1_{\{T< \zeta(x)\}}\int_0^{T\wedge \tau}\langle W_s(\dot{k}(s)v,  v), \ptr_s d  B_s\rangle\r]\notag\\
  &+\E\left[f(X_T)\1_{\{T<\zeta(x)\}}\int_S^{T}\langle Q_s(\dot{\ell}(s)v), \ptr_s d  B_s\rangle\int_0^{S}\langle Q_s(\dot{k}(s)v), \ptr_s d  B_s\rangle\right].
    \label{Eq:HessianSemigr}
\end{align}
This formula has the advantage to be concise with only two terms on
the right-hand side to characterize the Hessian of semigroup; the
martingale approach to this formula is direct and does not need
advanced path perturbation theory on Riemannian manifold. However, it
depends on the choice of two random test functions $k$ and $\ell$.
For local estimates the construction of two different random times
$\tau$ and $\sigma$ is required, which complicates the calculation of
the coefficients in the estimates, see \cite{APT}.

Let us compare formula \eqref{Eq:HessianSemigr} with the Hessian
formula from \cite{Stroock} where the manifold is supposed to be
compact. Stroock proved that in this case one deterministic function
test function $k\in C^{1}([0,T], [0,1])$ satisfying $k(0)=1$ and
$k(t)=0$ is sufficient in a Bismut-type formula for the Hessian of the
semigroup (see \cite[Eq.~(1.6)]{Stroock97} and formula
\eqref{Hessian-formula-local} below). In what follows, we call it the
Bismut-Stroock Hessian formula.  Although its proof relies on
perturbation theory of Brownian paths and more terms are involved in
the formula, the fact that only one test function $k$ enters makes it
attractive for applications to the short-time behavior of the heat
kernel, see \cite{Stroock97}.  Recently, Chen, Li and Wu \cite{CLW}
adopted a perturbation of the driving force $B_t$ with a second order
term for a new approach to this formula.  The formula is derived with
localized vector fields and extends to a general (non-compact)
complete Riemannian manifold by introducing a family of cut-off
processes.

The described works motivate the following two questions:
\begin{enumerate}[(i)]
\item \emph{Can one extend the Stroock-Bismut Hessian formula
    \cite{Stroock} to a local Bismut-type formula for the Hessian heat
    semigroup by a direct construction of suitable martingales, and
    achieve from it explicit local estimates of\/ $\Hess P_tf$ by a
    proper choice of~$k$?}
\item \emph{Can the localized Bismut-Stroock Hessian formula be
    transformed into a formula for the Hessian of harmonic functions
    on regular domains in Riemannian manifolds from where quantitative
    estimates can be derived?}
\end{enumerate}

In the sequel we answer both questions positively.  Note that for the
first question, it has been explained in \cite[Section 4]{CLW} that it
seems difficult to adopt the perturbation theory of $M$-valued
Brownian motions in order to replace the non-random vector field on
path space by a random one; the time reversed field will not be
adapted anymore and the It\^o integral no longer be well defined. This
reason motivates our search for a different stochastic approach to the
problem in the form of a direct martingale argument.  Actually,
establishing localized Bismut-Stroock Hessian formula of semigroup
relies on the proper construction of martingales which turns out to be
the main difficulty for the first question. As an application of our
formula, we can give explicit local Hessian estimates for the heat
semigroup: for any regular domain $D\subset M$ and $x\in D$,
\begin{align}
  |\Hess P_tf|(x)
  \leq  \inf_{\delta>0}\l\{\l(\frac{t}{2}\sqrt{ K_1^2+\frac{K_2^2}{\delta }}+\frac{2}{ t}\r)\exp\l(t\,\Biggl(K_0^-+ \frac{\delta}{2}+\frac{\pi \sqrt{(n-1)K^-_0}}{2 \delta_x} + \frac{\pi^2 (n+3)}{4\delta_x^2}\Biggl)\r)\r\}\|f\|_{D},\label{Eq:HessPtf}
\end{align}
where $\delta_x=\rho(x,\partial D)$ is the Riemannian distance of $x$
to the boundary of $D$.  Compared with known estimates,
e.g.~\cite{APT}, the constants in \eqref{Eq:HessPtf} are explicit and
concise.  The estimate is not essentially more complicated, compared
to the local gradient estimate \cite{TW98}.

The second problem is then to extract Hessian estimates for harmonic
functions from the Bismut-Stroock Hessian formula which requires a
careful estimate of the terms involving $W{.}(v,w)$ and $Q{.}(v)$,
along with a proper choice of the process $k$. Let $D\subset M$ be a
regular domain.  In Section \ref{HF-section}, we obtain that for a
bounded positive harmonic function $u$ on $D$,
\begin{align*}
  |\Hess\, u|(x)&\leq \inf_{\substack{0<\delta_1<1/2\\ \delta_2>0}}\Bigg\{\sqrt{(1+2\1_{\{K_0^-\neq 0\}})\l((\delta_1+1)K_1^2+\frac{\delta_2}{2}K_2^2\r)}\notag\\
                &\quad +\sqrt{6}\,\Bigg(\frac{1}{2\delta_2}+3K_0^-+\frac{\pi \sqrt{(n-1)K^-_0}}{2 \delta_x} + \frac{\pi^2 (n+3+\delta_1^{-1})}{4\delta_x^2}\Bigg)\Bigg\}\sqrt{\|u\|_D\,u(x)},
\end{align*}
where constants $K_0$, $K_1$ and $K_2$ are defined as in
\eqref{Ric-D}--\eqref{dR-dRic} below; see also Theorem \ref{them-1}.

The paper is organized as follows. In Section \ref{section-HessPf} we
construct several local martingales to achieve local Hessian formulas
for the heat semigroup which are consistent with Stroock's
construction \cite{Stroock97} on compact manifolds with a
deterministic function $k$.  For reader's convenience, Section
\ref{HF-section} provides the method to construct~$k$ which follows
\cite{TW98}.  In Section 4 we give explicit local Hessian estimates
for harmonic functions. Finally, in Section 5, we apply our method
also to estimate the second derivative of heat semigroups and heat
kernels.

\section{Bismut-type formulas for the Hessian of  $P_tf$}\label{section-HessPf}
We start by constructing our fundamental martingale which will be the
basis for our approach.  To this end, for $v,w\in T_xM$  let
\begin{align*}
  W_{t}^k(v,w)=Q_t\int_0^tQ_{r}^{-1}R\big(\ptr_{r} d B_r, Q_{r}(k(r)v)\big)Q_{r}(w)-\frac12 Q_t\int_0^t Q_{r}^{-1}(\bd^* R+\nabla \Ric)^{\sharp}\big(Q_{r}(k(r)v), Q_{r}(w)\big)\, d r.
\end{align*}
It is easy to see that $W_{t}^k$ solves the following equation:
\begin{align*}
  DW^k_{t}(v, w)=R\big(\ptr_{t}\, d B_t, Q_{t}(k(t)v)\big)Q_{t}(w)-\frac{1}{2}(\bd^* R+\nabla \Ric)^{\sharp}\big(Q_{t}(k(t)v), Q_{t}(w)\big)\,d t-\frac{1}{2}\Ric^{\sharp}(W^k_{t}(v, w))\,d t,
\end{align*}
and $W_{0}^k(v,w)=0$.\goodbreak

\begin{theorem}\label{th1}
  Let $x\in M$ and $D$ be a relatively compact open domain in $M$ such
  that $x\in D$.  Let $\tau$ be a stopping time such that
  $0<\tau\leq\tau_D$ where $\tau_D$ denotes the first exit time of
  $X(x)$ with starting point $x\in D$.  Fix $T>0$ and suppose that $k$
  is a bounded, non-negative and adapted process with paths in the
  Cameron-Martin space $L^{1,2}([0,T];\R)$ such that $k(s)=0$ for
  $s\geq T\wedge \tau$, $k(0)=1$. Then for $f\in \mathcal{B}_b(M)$ and
  $v,w\in T_xM$,
  \begin{align*}
    &(\Hess  P_{T-t}f)\big(Q_t(k(t)v), Q_t(k(t) v)\big)+(\bd P_{T-t}f)
      (W_t^k(v,k(t) v)) \notag\\
    &\quad -2\bd P_{T-t}f(Q_t(k(t) v))\int_0^t\langle Q_s(\dot{k}(s)v),
      \ptr_sd  B_s \rangle
      \notag\\
    &\quad -P_{T-t}f(X_t)\int_0^t\langle W_s^k(v,\dot{k}(s) v),\ptr_sd  B_s \rangle \notag\\
    &\quad +P_{T-t}f(X_t)\l(\l(\int_0^t\langle Q_s(\dot{k}(s)v),
      \ptr_sd  B_s \rangle\r)^2-\int_0^t |Q_s(\dot{k}(s)v)|^2  \,ds\r)
  \end{align*}
  is a local martingale on $[0, \tau\wedge T)$.
\end{theorem}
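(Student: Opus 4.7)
The plan is to apply Itô's formula to each of the five summands in the stated expression, track the bounded-variation (drift) contributions, and observe pairwise cancellation. Abbreviate $\phi_t := P_{T-t}f$, $\Phi_t := \Hess\phi_t(Q_tv, Q_tv)$, $Y_t := \nabla\phi_t(Q_tv)$, and $\xi_t := W^k_t(v,v)$; set $m_t := \int_0^t \langle Q_s(\dot k(s)v), \ptr_s dB_s\rangle$, with quadratic variation $q_t := \int_0^t |Q_s(\dot k(s)v)|^2\,ds$, and $n_t := \int_0^t \langle W_s^k(v, \dot k(s)v), \ptr_s dB_s\rangle$. By linearity of the defining SDEs, $W_t^k(v, k(t)v) = k(t)\xi_t$ and $Q_t(k(t)v) = k(t)Q_tv$, so the expression in the theorem equals $\mathcal{N}_t := k(t)^2\Phi_t + k(t)\nabla\phi_t(\xi_t) - 2k(t)Y_t m_t - \phi_t(X_t)\, n_t + \phi_t(X_t)(m_t^2 - q_t)$.

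I would first record three building-block local martingales. Itô on $\phi_t(X_t)$ together with $\partial_t\phi_t + L\phi_t = 0$ gives $d\phi_t(X_t) = \langle\nabla\phi_t, \ptr_t dB_t\rangle$. Combining the Weitzenbock identity $\tilde\Delta\nabla\phi = \nabla\Delta\phi + \Ric^\sharp(\nabla\phi)$ with $DQ_t = -\tfrac12\Ric^\sharp(Q_t)\,dt$ makes the Ricci drifts cancel and yields $dY_t = \Hess\phi_t(\ptr_t dB_t, Q_tv)$. The APT identity states that, with $W_t$ the $k\equiv 1$ version of $W_t^k$, the process $\Phi_t + \nabla\phi_t(W_t(v,v))$ is a local martingale; reading off both sides in parallel-transport coordinates, the drift of $\Phi_t$ equals $-\langle\nabla\phi_t, F_t\rangle\,dt - \mathcal{R}_t\,dt$, where $F_t := -\tfrac12(\bd^{*}R + \nabla\Ric)^\sharp(Q_tv, Q_tv)$ and $\mathcal{R}_t := \sum_i\Hess\phi_t(f_i, R(f_i, Q_tv)Q_tv)$ for an orthonormal frame $\{f_i\}$ of $T_{X_t}M$ (the curvature covariation produced by pairing the martingale part of $\ptr_t^{-1}\nabla\phi_t$ with that of $\ptr_t^{-1}W_t$). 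A parallel computation for $\xi_t$, whose SDE carries an extra factor $k(t)$ in front of both the curvature and $F_t$ terms but retains the full Ricci damping, gives the drift of $\nabla\phi_t(\xi_t)$ as $k(t)\langle\nabla\phi_t, F_t\rangle\,dt + k(t)\mathcal{R}_t\,dt$.

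I would then use Itô's product rule on each summand of $\mathcal{N}_t$. Term 1 ($k^2\Phi_t$) contributes $[2k\dot k\Phi_t - k^2\langle\nabla\phi_t, F_t\rangle - k^2\mathcal{R}_t]\,dt$; Term 2 ($k\nabla\phi_t(\xi_t)$) contributes $[\dot k\nabla\phi_t(\xi_t) + k^2\langle\nabla\phi_t, F_t\rangle + k^2\mathcal{R}_t]\,dt$; Term 3 ($-2kY_tm_t$), using $d\langle Y, m\rangle_t = \dot k\Phi_t\,dt$, contributes $-[2\dot kY_tm_t + 2k\dot k\Phi_t]\,dt$; Term 4 ($-\phi_t(X_t)n_t$), using $d\langle\phi(X),n\rangle_t = \dot k\nabla\phi_t(\xi_t)\,dt$, contributes $-\dot k\nabla\phi_t(\xi_t)\,dt$; Term 5 ($\phi_t(X_t)(m_t^2-q_t)$), using that $m^2-q$ is a local martingale and $d\langle\phi(X),m^2\rangle_t = 2m_t\dot kY_t\,dt$, contributes $+2\dot kY_tm_t\,dt$. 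Summing the five lines, the $\pm k^2\langle\nabla\phi_t,F_t\rangle$ and $\pm k^2\mathcal{R}_t$ drifts cancel between Terms 1 and 2, the $\pm 2k\dot k\Phi_t$ drifts cancel between Terms 1 and 3, the $\pm\dot k\nabla\phi_t(\xi_t)$ drifts cancel between Terms 2 and 4, and the $\pm 2\dot kY_tm_t$ drifts cancel between Terms 3 and 5. The total drift therefore vanishes on $[0,\tau\wedge T)$ and $\mathcal{N}_t$ is a local martingale there.

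The main obstacle is justifying the drift of $\Phi_t$: this requires a Weitzenbock-type commutator for the Hessian together with the $\bd^{*}R$ identity recorded just before the theorem, which is exactly what makes the $(\bd^{*}R + \nabla\Ric)$-drift of $DW_t$ match the curvature covariation $\mathcal{R}_t$ with the correct sign. This is essentially the APT calculation; extending it to track the factor $k(t)$ through the SDE for $W_t^k$ is routine. Once that piece is in place, the five cancellations above are purely algebraic in $k$, $\dot k$ and the scalar processes $\Phi_t$, $Y_t$, $m_t$, $\nabla\phi_t(\xi_t)$.
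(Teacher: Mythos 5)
Your proposal is correct and arrives at the same result by a genuinely different organization of the argument. The paper proceeds \emph{constructively}: starting from the Thompson/APT local martingale $N_t=\Hess P_{T-t}f(Q_tv,Q_tv)+\bd P_{T-t}f(W_t(v,v))$, it successively modifies to $N^k_t$, replaces the second argument $v$ by $k(t)v$, exchanges the order of an iterated Stieltjes integral, and then eliminates the resulting $\int_0^t(\cdots)\,ds$ compensators one at a time by pairing them with the martingale representation $P_{T-t}f(X_t)=P_Tf(x)+\int_0^t\bd P_{T-s}f(\ptr_s\,dB_s)$ and with the product $m_t^2-q_t$; the target expression only emerges at the end. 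You instead \emph{verify}: after recording the same three building blocks --- the driftlessness of $\phi_t(X_t)$ and of $Y_t=\nabla\phi_t(Q_tv)$, and the drift of $\Phi_t$ read off from the APT/Thompson identity --- you apply It\^o's product rule directly to the five summands of $\mathcal N_t$ and exhibit a complete pairwise cancellation of the drifts ($\pm k^2\langle\nabla\phi_t,F_t\rangle$ and $\pm k^2\mathcal R_t$ between Terms 1 and 2; $\pm 2k\dot k\Phi_t$ between 1 and 3; $\pm\dot k\,\nabla\phi_t(\xi_t)$ between 2 and 4; $\pm 2\dot k\,Y_tm_t$ between 3 and 5). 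Your linearity observation $W^k_t(v,k(t)v)=k(t)\xi_t$, $Q_t(k(t)v)=k(t)Q_tv$ is the same algebraic reduction the paper uses implicitly via $N^k_t(v,k(t)v)=N^k_t(v,v)\,k(t)$. The covariation computations $d\langle Y,m\rangle_t=\dot k\Phi_t\,dt$, $d\langle\phi(X),n\rangle_t=\dot k\,\nabla\phi_t(\xi_t)\,dt$ and $d\langle\phi(X),m^2\rangle_t=2m_t\dot kY_t\,dt$ are exactly the content of the paper's integration-by-parts steps \eqref{local-M2-1}, \eqref{local-M3} and \eqref{eqn}; you just don't need the Fubini interchange in \eqref{local-M2-1st}--\eqref{local-M2} because you never introduce the intermediate compensated processes. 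What your route buys is concision and a self-checking structure (one single drift computation that must vanish term by term), at the cost of being non-constructive --- it does not explain where the five-term combination comes from, which the paper's chain of local martingales does.
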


\begin{proof}
  First of all, by an approximation argument we may assume that
  $f\in C^{\infty}(M)$ and is constant outside a compact set so that
  $|\bd f|$ and $\Delta f$ are bounded.

  Let
  \begin{align*}
    N_t(v,v)=\Hess  P_{T-t}f(Q_t(v), Q_t(v))+(\bd P_{T-t}f)(W_t(v,v)).
  \end{align*}
  Then $N_t(v,v)$ is a local martingale, see for instance the proof of
  \cite[Lemma 2.7]{Thompson2019} with potential $V\equiv 0$.
  Furthermore, define
$$N^k_t(v,v)=\Hess  P_{T-t}f(Q_t(k(t)v), Q_t(v))+(\bd P_{T-t}f)(W^k_t(v,v)).$$ 
According to the definition of $W^k_t(v,v)$, resp.~$W_t(v,v)$, and in
view of the fact that $N_t(v,v)$ is a local martingale, it is easy to
see that
\begin{align}\label{local-M1}
  N_t^k(v,v)&-\int_0^t(\Hess  P_{T-s}f)(Q_s(\dot{k}(s)
              v), Q_s(v))\,ds
\end{align}
is a local martingale as well.  Replacing in $N_t^k(v,v)$ the second
argument $v$ by $k(t)v$, we further see that also
\begin{align}\label{local-M2-1st}
  N_t^k(v,k(t)v)&-\int_0^t(\Hess  P_{T-s}f)(Q_s(\dot{k}(s)
                  v), Q_s(k(t)v))\,ds\notag\\
                &-\int_0^t\Hess  P_{T-s}f(Q_s(k(s)v), Q_s(\dot{k}(s)v))\,ds -\int_0^t(\bd P_{T-s}f)(W^k_s(v,\dot{k}(s)v))\,ds \notag \\
                &+ \int_0^t\int_0^s(\Hess  P_{T-r}f)(Q_r(\dot{k}(r)
                  v), Q_r(\dot{k}(s)v))\,dr\ ds
\end{align}
is a local martingale. Note that $N_t^k(v,k(t)v)=N_t^k(v,v)\,k(t)$.
Exchanging the order of integration in the
last term shows that
\begin{align}
  &N_t^k(v,k(t)v)-\int_0^t(\Hess  P_{T-s}f)(Q_s(\dot{k}(s)
    v), Q_s(k(t)v))\,ds\notag\\
  &\qquad-\int_0^t\Hess  P_{T-s}f(Q_s(k(s)v), Q_s(\dot{k}(s)v))\,ds -\int_0^t(\bd P_{T-s}f)(W^k_s(v,\dot{k}(s)v))\,ds \notag \\
  &\qquad+ \int_0^t(\Hess  P_{T-r}f)(Q_r(\dot{k}(r)
    v), Q_r((k(t)-k(r))v))\,dr\notag\\
  &=N_t^k(v,k(t)v) -\int_0^t(\bd P_{T-s}f)(W^k_s(v,\dot{k}(s)v))\,ds -2\int_0^t\Hess  P_{T-s}f(Q_s(k(s)v), Q_s(\dot{k}(s)v))\,ds\label{local-M2}
\end{align}
is a local martingale. Moreover, by the formula
\begin{align}\label{Pt-martingale}
  P_{T-t}f(X_t)=P_Tf(x)+\int_0^t\bd P_{T-s}f(\ptr_sd  B_s)
\end{align}
and integration by parts,
\begin{align}\label{local-M2-1}
  \int_0^t(\bd P_{T-s}f)(W_s^k(v,\dot{k}(s)v))\,ds-P_{T-t}f(X_t)
  \int_0^t\langle W_s^k(v,\dot{k}(s)v), \ptr_s\,d  B_s \rangle
\end{align}
is a local martingale. Similarly, from the formula
\begin{align*}
  \bd P_{T-t}f(Q_t(k(t)v))=\bd P_{T}f(v)+\int_0^t(\Hess  P_{T-s}f)
  (\ptr_sd  B_s, Q_s(k(s) v))+\int_0^t\bd P_{T-s}f(Q_s(\dot{k}(s)v))\,ds
\end{align*}
it follows that
\begin{align}\label{local-M3}
  &\int_0^t(\Hess  P_{T-s}f)(Q_s(\dot{k}(s)v), Q_s(k(s) v))
    \,ds-\bd P_{T-t}f(Q_t(k(t) v))\int_0^t\langle Q_s(\dot{k}(s)v),
    \ptr_sd  B_s\rangle \notag\\
  &\quad+\int_0^t\bd P_{T-s}f(Q_s(\dot{k}(s)v))\,ds \,\int_0^t\langle Q_s(\dot{k}(s)v)
    \ptr_sd  B_s\rangle
\end{align}
is also a local martingale. Concerning the last term in
\eqref{local-M3}, we note that
\begin{align*}
  &\int_0^t\bd P_{T-s}f(Q_s(\dot{k}(s)v))\,ds \,\int_0^t\langle Q_s(\dot{k}(s)v),\ptr_sd  B_s\rangle - \int_0^t \bd P_{T-s}f(Q_s(\dot{k}(s)v))\int_0^s\langle Q_r(\dot{k}(r)v),\ptr_rd  B_r\rangle\,ds
\end{align*}
is a local martingale. Combining this with \eqref{local-M3} we
conclude that
\begin{align}\label{local-M3-1}
  &\int_0^t(\Hess  P_{T-s}f)(Q_s(\dot{k}(s)v), Q_s(k(s) v))
    \,ds-\bd P_{T-t}f(Q_t(k(t) v))\int_0^t\langle Q_s(\dot{k}(s)v),
    \ptr_sd  B_s\rangle \notag\\
  &\quad+\int_0^t \bd P_{T-s}f(Q_s(\dot{k}(s)v))\int_0^s\langle Q_r(\dot{k}(r)v),\ptr_rd  B_r\rangle\,ds
\end{align}
is a local martingale.  Using the local martingales
\eqref{local-M2-1} and \eqref{local-M3-1} to replace the last two
terms in \eqref{local-M2}, we conclude that
\begin{align}\label{Martingale-1}
  &(\Hess  P_{T-t}f)(Q_t(k(t)v), Q_t(k(t) v))+(\bd P_{T-t}f)
    (W_t^k(v,k(t) v))  \notag\\
  &\quad -P_{T-t}f(X_t)\int_0^t\langle W_s^k(v,\dot{k}(s) v),\ptr_sd  B_s \rangle \notag\\
  &\quad -2\bd P_{T-t}f(Q_t(k(t) v))\int_0^t\langle Q_s(\dot{k}(s)v),
    \ptr_sd  B_s \rangle  \notag\\
  &\quad +2\int_0^t\bd P_{T-s}f(Q_s(\dot{k}(s)v)) \int_0^s\langle Q_r(\dot{k}(r)v),\ptr_rd  B_r\rangle \,ds 
\end{align}
is a local martingale as well.  On the other hand, by the product rule
for martingales, we have
\begin{align}\label{eqn}
  \l(\int_0^t\langle Q_s(\dot{k}(s)v),
    \ptr_sd  B_s \rangle\r)^2-\int_0^t |Q_s(\dot{k}(s)v)|^2\,ds = 2\int_0^t \l(\int_0^s\langle Q_r(\dot{k}(r)v),
    \ptr_rd  B_r \rangle\r)\langle Q_s(\dot{k}(s)v),
    \ptr_sd  B_s \rangle
\end{align}
which along with \eqref{Pt-martingale} implies that
\begin{align*}
  & P_{T-t}f(X_t)\l(\l(\int_0^t\langle Q_s(\dot{k}(s)v),
    \ptr_sd  B_s \rangle\r)^2-\int_0^t |Q_s(\dot{k}(s)v)|^2\,ds\r) \\
  &\quad-2\int_0^t\bd P_{T-s}f(Q_s(\dot{k}(s)v)) \int_0^s\langle Q_s(\dot{k}(s)v),\ptr_sd  B_s\rangle \,ds
\end{align*}
is a local martingale.  Applying this observation to
Eq.~\eqref{Martingale-1}, we finally see that
\begin{align*}
  &(\Hess  P_{T-t}f)\big(Q_t(k(t)v), Q_t(k(t) v)\big)\notag\\
  &\ +(\bd P_{T-t}f)
    (W_t^k(v,k(t) v))  \notag\\
  &\ -2\bd P_{T-t}f(Q_t(k(t) v))\int_0^t\langle Q_s(\dot{k}(s)v),
    \ptr_sd  B_s \rangle  \notag\\
  &\ -P_{T-t}f(X_t)\int_0^t\langle W_s^k(v,\dot{k}(s) v),\ptr_sd  B_s \rangle \notag\\
  &\ +P_{T-t}f(X_t)\l(\l(\int_0^t\langle Q_s(\dot{k}(s)v),
    \ptr_sd  B_s \rangle\r)^2-\int_0^t |Q_s(\dot{k}(s)v)|^2\,ds\r)
\end{align*}
is local martingale. This completes the proof.
\end{proof}

In order to formulate explicit Hessian estimates we need to introduce some geometric
bounds.  Let $D\subset M$ be a regular domain and $\tau_D$ be the first
exit time of $X(x)$ from $D$. We consider the following constants:
\begin{align}
&K_0:=\inf\l\{{\rm Ric}(v,v)\colon y\in D,\ v\in T_yM,\ |v|=1\r\};\label{Ric-D}\\
& K_1:=\sup\l\{|R|(y)\colon y\in D \r\};\label{R-D}\\
&K_2:=\sup\l\{|(\bd^*R+\nabla \Ric)^{\sharp}(v,v)|(y)\colon y\in D,\ v,w\in T_yM,\ |v|=|w|=1\r\}. \label{dR-dRic}
\end{align}
For $x\in M$ and $v,w\in T_xM$, we remark that
\begin{align*}
|R^{\sharp, \sharp}(v,v)|_{\text{\tiny HS}}(x)\leq |R|(x)\,|v|\,|v|
\end{align*}
where
$$|R^{\sharp, \sharp}(v,w)|_{\text{\tiny HS}}:=\sqrt{\sum_{i,j=1}^nR(e_i, v,w,e_j)^2}$$
and $\{e_i\}_{i=1}^n$ denotes an orthonormal base of $T_xM$.\smallskip 

 We shall need the following lemma.

 \begin{lemma}\label{th3}
Keep the assumptions and the definition of $k$ as in Theorem \ref{th1}, and   
assume that $k$ is bounded by~$1$. If $\int_0^T|\dot{k}(t)|^{2q}\,dt\in L^1$ for some $q\in (1,+\infty]$, then
\begin{align*}
  \E\left[\int_0^T|W_t^k(v,\dot{k}(t)v)|^2\,dt\right]\leq   \l(\frac{(2p-1) ^{p}K_1^{2p}}{2^2\delta^{p-1}}+\frac{K_2^{2p}}{2^2\delta^{2p-1} }\r)^{1/p}\e^{(2K_0^-+\delta)T}T^{2/p}\,
  \l(\E\l[\int_0^T|\dot{k}(t)|^{2q}\,dt\r]\r)^{1/q}<\infty
\end{align*}
for $\delta>0$ and $1/p+1/q=1$.
In particular, if $K_2\equiv 0$ and $k\in C^1([0,T],[0,1])$ a deterministic function, then 
\begin{align*}
\E\left[\int_0^T|W^k_t(v,\dot{k}(t)v)|^2\,dt\right]\leq  K_1^2T\e^{2K_0^-T}\,\int_0^T|\dot{k}(t)|^2\,dt<\infty.
\end{align*}
\end{lemma}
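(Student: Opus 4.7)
My approach is a two-step reduction: first exploit bilinearity together with Hölder's inequality to reduce to a pure $2p$-th moment bound on $W_t^k(v,v)$, and then estimate that moment by an Itô--Gronwall argument using the defining covariant SDE. Since $w\mapsto W_t^k(v,w)$ is linear for fixed $v$, the scalar $\dot k(t)$ factors out of the defining integral and yields the pointwise identity $W_t^k(v,\dot k(t)v)=\dot k(t)\,W_t^k(v,v)$. Hölder's inequality in the pair $(t,\omega)$ with dual exponents $p$ and $q$ then gives
\begin{align*}
\E\int_0^T|W_t^k(v,\dot k(t)v)|^2\,dt
&=\E\int_0^T|\dot k(t)|^2|W_t^k(v,v)|^2\,dt\\
&\le\Bigl(\E\int_0^T|\dot k(t)|^{2q}\,dt\Bigr)^{1/q}\Bigl(\E\int_0^T|W_t^k(v,v)|^{2p}\,dt\Bigr)^{1/p},
\end{align*}
so the task reduces to a $2p$-th moment bound on $|W_t^k(v,v)|$.

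For that moment bound I would parallel-translate $W_t^k(v,v)$ back to $T_xM$ and apply Itô's formula to $x\mapsto|x|^{2p}$, using the SDE $DW_t^k(v,v)=\sigma_t\,dB_t+b_t\,dt-\tfrac12\Ric^\sharp(W_t^k(v,v))\,dt$ with $\sigma_t\,dB_t=R(\ptr_t dB_t,Q_t(k(t)v))Q_t(v)$ and $b_t=-\tfrac12(\bd^*R+\nabla\Ric)^\sharp(Q_t(k(t)v),Q_t(v))$. Combined with the elementary bounds $|Q_tu|\le e^{K_0^-t/2}|u|$, the Hilbert--Schmidt estimate $\|R^{\sharp,\sharp}(u,w)\|_{\text{\tiny HS}}\le K_1|u||w|$, the bound $|(\bd^*R+\nabla\Ric)^\sharp(u,w)|\le K_2|u||w|$, the Ricci lower bound $-\langle W,\Ric^\sharp W\rangle\le K_0^-|W|^2$, and $k\le 1$, these yield $\|\sigma_t\|_{\text{\tiny HS}}^2\le K_1^2 e^{2K_0^-t}$ and $|b_t|^2\le K_2^2 e^{2K_0^-t}/4$, and the drift of $|W_t^k(v,v)|^{2p}$ is dominated by $pK_0^-|W|^{2p}+2p|W|^{2p-1}|b_t|+p(2p-1)|W|^{2p-2}\|\sigma_t\|_{\text{\tiny HS}}^2$. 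Splitting the two cross-terms by Young's inequality with scaling parameters tuned so that the $|W|^{2p}$ contribution combines with $K_0^-$ into $p(K_0^-+\delta)$ and the remainders pick up the announced denominators $\delta^{p-1}$ and $\delta^{2p-1}$, then taking expectations and performing the substitution $g=(\E|W_t^k(v,v)|^{2p})^{1/p}$ linearizes the resulting Bernoulli-type inequality to
$$g'(t)\le(K_0^-+\delta)\,g(t)+C(p,\delta)^{1/p}\,e^{2K_0^-t},\qquad g(0)=0,$$
with $C(p,\delta)=\tfrac{(2p-1)^pK_1^{2p}}{4\delta^{p-1}}+\tfrac{K_2^{2p}}{4\delta^{2p-1}}$. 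Gronwall, together with the elementary estimate $\int_0^t e^{(K_0^--\delta)s}\,ds\le t\,e^{K_0^-t}$, yields $g(t)\le C(p,\delta)^{1/p}\,t\,e^{(2K_0^-+\delta)t}$. Raising to the $p$-th power, integrating over $[0,T]$, and combining with the Hölder estimate above produces the stated inequality.

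The special case $K_2\equiv 0$ with deterministic $k\in C^1([0,T],[0,1])$ is much simpler: the $(\bd^*R+\nabla\Ric)^\sharp$-drift vanishes and one may take $p=1$ directly in the Itô computation above to obtain $\frac{d}{dt}\E|W_t^k(v,v)|^2\le K_0^-\E|W_t^k|^2+K_1^2 k(t)^2 e^{2K_0^-t}$; Gronwall (using $k^2\le 1$) gives $\E|W_t^k(v,v)|^2\le K_1^2\,t\,e^{2K_0^-t}$, and multiplying by the deterministic $|\dot k(t)|^2$ and integrating gives the special-case bound. The main technical obstacle is the bookkeeping of the Young step in the general case: to match the exact coefficients $\tfrac{(2p-1)^pK_1^{2p}}{4\delta^{p-1}}$ and $\tfrac{K_2^{2p}}{4\delta^{2p-1}}$ one must apply Young's inequality separately to the $|W|^{2p-1}|b|$ and $|W|^{2p-2}\|\sigma\|^2$ terms with different scaling parameters rather than using a single naive $\delta$-split.
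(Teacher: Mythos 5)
Your first step — using bilinearity to write $W_t^k(v,\dot k(t)v)=\dot k(t)\,W_t^k(v,v)$ and then applying H\"older twice with exponents $(p,q)$ to reduce the problem to bounding $\E\int_0^T|W_t^k(v,v)|^{2p}\,dt$ — is exactly what the paper does, and your It\^o/Young set-up for the $2p$-th moment is also the same. The genuine problem is the ``Bernoulli substitution'' $g(t)=\bigl(\E|W_t^k(v,v)|^{2p}\bigr)^{1/p}$. After the Young split the inequality for $h(t):=\E|W_t^k(v,v)|^{2p}$ is \emph{already linear},
\begin{align*}
  h'(t)\le (K_0^-+\delta)p\,h(t)+C(p,\delta)\,\e^{2pK_0^-t},\qquad C(p,\delta)=\tfrac{(2p-1)^pK_1^{2p}}{2\delta^{p-1}}+\tfrac{K_2^{2p}}{2\delta^{2p-1}},
\end{align*}
so there is nothing to linearize; and if you nevertheless substitute $g=h^{1/p}$ you obtain $g'\le (K_0^-+\delta)\,g+\frac{C(p,\delta)}{p}\,g^{1-p}\,\e^{2pK_0^-t}$, which for $p>1$ has the singular factor $g^{1-p}$ and is certainly not the linear inequality $g'\le (K_0^-+\delta)g+C(p,\delta)^{1/p}\e^{2K_0^-t}$ that you wrote down. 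Consequently the intermediate bound $g(t)\le C(p,\delta)^{1/p}\,t\,\e^{(2K_0^-+\delta)t}$ is unjustified; the bound that actually follows from the linear $h$-inequality is $h(t)\le C(p,\delta)\,t\,\e^{(2K_0^-+\delta)pt}$, i.e.\ $g(t)\le C(p,\delta)^{1/p}\,t^{1/p}\,\e^{(2K_0^-+\delta)t}$, with $t^{1/p}$ rather than $t$.

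This discrepancy is not cosmetic. With your $g$-bound, $\int_0^T h\,dt\le C(p,\delta)\,\e^{(2K_0^-+\delta)pT}\,T^{p+1}/(p+1)$, whose $p$-th root carries $T^{(p+1)/p}=T^{1+1/p}$, not the $T^{2/p}$ appearing in the lemma (and for $p>1$, $T^{1+1/p}>T^{2/p}$ when $T>1$, so your route gives a strictly weaker inequality there). The fix is simply to drop the substitution: apply Gronwall directly to $h$, use $\int_0^t\e^{(K_0^--\delta)ps}\,ds\le t\,\e^{K_0^-pt}$ to get $h(t)\le C(p,\delta)\,t\,\e^{(2K_0^-+\delta)pt}$, integrate $t$ from $0$ to $T$ to pick up the extra $T/2$, and then take the $p$-th root — this recovers the stated $2^2$ in the denominator and the power $T^{2/p}$. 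Your treatment of the special case $K_2\equiv 0$ with deterministic $k$ (taking $p=1$, Gronwall on $\E|W_t^k(v,v)|^2$, then pulling $\dot k(t)^2$ outside) is correct and agrees with the paper.
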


\begin{proof}
  From the definition of the operator $Q_s$ and the constant $K_0$, we
  know that $|Q_s|\leq \e^{-K_0s/2}$. According to It\^{o}'s
  formula and the definition of $ K_1,K_2$, we have for
  $v,w\in T_xD$ and $s<\tau\leq\tau_D$,
  \begin{align}
    d  |W_s^k(v, v)|^{2p}
    &= p\,(|W^k_s(v,v)|^2)^{p-1}\l[2 \l\langle R\big(\ptr_sd  B_s, Q_s(k(s)v)\big)Q_s(v), W^k_s(v,v)\r\rangle+ |R^{\sharp,\sharp}\big(Q_s(k(s)v),Q_s(v)\big)|_{\text{\tiny HS}}^2\,ds\r.\notag\\
    &\quad- \l.\l\langle (\bd^* R+\nabla \Ric^{\sharp})(Q_s(k(s)v), Q_s(v)), W^k_s(v,v) \r\rangle\,ds- \Ric(W^k_s(v,v),W^k_s(v,v))\,ds\r]\notag\\
    &\quad +2p(p-1)\,(|W^k_s(v,v)|^2)^{p-1} |R^{\sharp,\sharp}\big(Q_s(k(s)v),Q_s(v)\big)|_{\text{\tiny HS}}^2\,ds\notag\\
    &\leq  p\, (|W^k_s(v,v)|^2)^{p-1}\l[ 2\l\langle R\big(\ptr_sd  B_s, Q_s(k(s)v)\big)Q_s(v), W^k_s(v,v)\r\rangle\r.+\notag\\
    &\quad+ \l. K_2 \e^{-K_0s} |v|^2\, |W^k_s(v,v)|\,ds-K_0 \,|W^k_s(v,v)|^2\,ds\r]\notag\\
    &\quad +p(2p-1)\,K_1^2\e^{-2K_0s}|v|^4\,(|W^k_s(v,v)|^2)^{p-1}\,ds\notag\\
    &\leq 2p\,(|W^k_s(v,v)|^2)^{p-1}\l\langle R\big(\ptr_sd  B_s, Q_s(k(s)v)\big)Q_s(v), W^k_s(v,v)\r\rangle\notag\\
    &\quad +p(2p-1)\, K_1^2\e^{-2K_0s}(|W^k_s(v,v)|^2)^{p-1}|v|^4\,ds\notag\\
    &\quad+ K_2 p\,\e^{-2K_0s}|v|^2(|W^k_s(v,v)|)^{2p-1}\,ds-K_0p\, |W^k_s(v,v)|^{2p}\,ds. \label{est-Ws}
  \end{align}
Taking into account that for $\delta>0$,
\begin{align*}
p(2p-1) K_1^2\e^{-2K_0s}(|W^k_s(v,v)|^2)^{p-1}|v|^4&\leq \frac{\delta p}{2} |W^k_t(v,v)|^{2p}
+\frac{(2p-1)^p}{2\delta^{p-1}}\e^{-2pK_0s}K_1^{2p}|v|^{4p};\\
 K_2 p\e^{-2K_0s}|v|^2(|W^k_s(v,v)|)^{2p-1}& \leq \frac{\delta p}{2} |W^k_t(v,v)|^{2p}+\frac{1}{2\delta ^{2p-1}}\e^{-2pK_0s}K_2^{2p}|v|^{4p},
\end{align*}
  we thus have
  \begin{align*}
    d \e^{(K_0p-\delta p )s} |W^k_s(v, v)|^{2p}
      &=2p\e^{(K_0p-\delta p)s} |W^k_s(v, v)|^{2(p-1)} \l\langle R\big(\ptr_sd  B_s, Q_s(k(s)v)\big)Q_s(v), W^k_s(v,v)\r\rangle\\
      &\quad+\e^{-(K_0p+\delta p)s}\l(\frac{(2p-1)^pK_1^{2p}}{2\delta^{p-1}}+\frac{K_2^{2p}}{2\delta^{2p-1} }\r)|v|^{4p}\,ds.
  \end{align*}
  Integrating this inequality from $0$ to $t\wedge \tau$ and
  taking expectation yields
  \begin{align*}
    &\E\left[\e^{(K_0-\delta)p(t\wedge \tau)}|W^k_{t\wedge \tau}(v,v)|^{2p}\right]
      \leq  \l(\frac{(2p-1)^pK_1^{2p}}{2\delta^{p-1}}+\frac{K_2^{2p}}{2\delta^{2p-1} }\r)\E\left[\int_0^{t\wedge \tau}\e^{-(K_0+\delta)ps}|v|^{4p}\,ds\right].
  \end{align*}
 If
  $|v|=1$, we then arrive at
  \begin{align*}
    &\E\left[\e^{(K_0-\delta)p(t\wedge \tau)}|W^k_{t\wedge \tau}(v,v)|^{2p}\right]
      \leq \l(\frac{(2p-1)^pK_1^{2p}}{2\delta^{p-1}}+\frac{K_2^{2p}}{2\delta^{2p-1} }\r)\E\left[\int_0^{t\wedge \tau}\e^{-(K_0+\delta)ps}\,ds\right],
  \end{align*}
  from where we conclude that
  \begin{align}\label{Weq}
    \E\l[|W^k_{t}(v,v)|^{2p}\r]
    &\leq \l(\frac{(2p-1)^pK_1^{2p}}{2\delta^{p-1}}+\frac{K_2^{2p}}{2\delta^{2p-1} }\r)\e^{(K_0^-+\delta)pt}\left[\int_0^{t}\e^{-(K_0+\delta)ps}\,ds\right] \notag\\
    &\leq   \l(\frac{(2p-1)^pK_1^{2p}}{2\delta^{p-1}}+\frac{K_2^{2p}}{2\delta^{2p-1} }\r)\e^{\delta pt} \e^{K_0^-pt}\int_0^t\e^{K_0^- ps}\,ds.
  \end{align}
 Thus we have
  \begin{align*}
\E\left[\int_0^T|W^k_t(v,\dot{k}(t)v)|^2\,dt\right]&=\E\left[\int_0^T\dot{k}(t)^2|W^k_t(v,v)|^2\,dt\right]\\ &\leq  \E \l[\l(\int_0^T\dot{k}(t)^{2q}\,dt\r)^{1/q} \l(\int_0^T|W^k_t(v,v)|^{2p}\,dt\r)^{1/p}  \r]\\
& \leq \l(\E\l[\int_0^T\dot{k}(t)^{2q}\,dt\r]\r)^{1/q}  \l(\E\l[\int_0^T|W^k_t(v,v)|^{2p}\,dt\r]\r)^{1/p}\\
& \leq  \l(\E\l[\int_0^T\dot{k}(t)^{2q}\,dt\r]\r)^{1/q}  \l(\frac{(2p-1)^pK_1^{2p}}{2\delta^{p-1}}+\frac{K_2^{2p}}{2\delta^{2p-1} }\r)^{1/p}\e^{\delta T} \\
&\qquad \times \l(\int_0^T\e^{K_0^-pt}\int_0^t\e^{K_0^- ps}\,ds dt\r)^{1/p}\\
& \leq  \l(\E\l[\int_0^T\dot{k}(t)^{2q}\,dt\r]\r)^{1/q}  \l(\frac{(2p-1)^pK_1^{2p}}{2^2\delta^{p-1}}+\frac{K_2^{2p}}{2^2\delta^{2p-1} }\r)^{1/p}\e^{\delta T+2K_0^-T} T^{2/p} 
\end{align*}
  which completes the proof of first inequality.
  
  In particular, if $k\in C^1([0,T])$, then 
  \begin{align}\label{Weq2}
\E\left[\int_0^T|W^k_t(v,\dot{k}(t)v)|^2\,dt\right]&=\left[\int_0^T\dot{k}(t)^2\,\E(|W^k_t(v,v)|^2)\,dt\right].
\end{align}
Moreover, from the calculation in \eqref{est-Ws}  with $K_2\equiv 0$ we see that
\begin{align*}
 d  |W_s^k(v, v)|^{2}
    &= 2\l\langle R\big(\ptr_sd  B_s, Q_s(k(s)v)\big)Q_s(v), W^k_s(v,v)\r\rangle\notag\\
    &\quad + K_1^2\e^{-2K_0s}|v|^4\,ds -K_0 |W^k_s(v,v)|^{2}\,ds 
\end{align*}
which implies 
\begin{align}\label{Eq:EstWt}
    \E|W_{t}^k(v,v)|^{2}
    \leq K^2_1\e^{2K_0^-T}T.
  \end{align}
  Combining \eqref{Weq2} and \eqref{Eq:EstWt} we obtain
  \begin{align*}
  \E\left[\int_0^T|W^k_t(v,\dot{k}(t)v)|^2\,dt\right]&\leq K^2_1\e^{2K_0^-T}T\left[\int_0^T\dot{k}(t)^2\,dt\right]
  \end{align*}
  which gives the additional claim.
\end{proof}

By means of Theorem \ref{th1} and Lemma \ref{th3} we are now in position
to formulate the localized Bismut-Stroock Hessian
formula as follows.

\begin{theorem}\label{th4}
  Let $x\in M$ and $D$ be a relatively compact open domain such that $x\in D$.  Let
  $\tau$ be a stopping time such that $0<\tau\leq\tau_D$.  Suppose that
  $k$ is a bounded, non-negative and adapted process with paths in the
  Cameron-Martin space $L^{1, 2q}([0,T]; \R)$ such that $k(t)=0$ for
  $t\geq T\wedge \tau$, $k(0)=1$ and
  $\int_0^{T\wedge \tau}|\dot{k}(t)|^{2q}\,dt\in L^1$ for some constant $q>1$. Then for
  $f\in \mathcal{B}_b(M)$ and $v,w\in T_xM$, we have
  \begin{align}\label{Hessian-formula-local}
    (\Hess  P_{T}f)(v,v)=
    &-\E^x\l[f(X_T)\1_{\{T<\zeta(x)\}}\int_0^{T\wedge \tau}\Bigg\langle Q_s\int_0^sQ_{r}^{-1}R(\ptr_{r} d B_r, Q_{r}(k(r)v))Q_{r}(\dot{k}(s)v), \ptr_s d  B_s\Bigg\rangle\r]\notag\\
    &-\E^x\l[f(X_T)\1_{\{T<\zeta(x)\}}\int_0^{T\wedge \tau}\Bigg\langle Q_s\int_0^sQ_{r}^{-1}(\bd^* R+\nabla \Ric)^{\sharp}(Q_{r}(k(r)v), Q_{r}(\dot{k}(s)v))\, d r, \ptr_s d  B_s\Bigg\rangle\r]\notag\\
    &+\E\left[f(X_T)\1_{\{T< \zeta(x)\}}\l(\l(\int_0^{T\wedge \tau}\langle Q_s(\dot{k}(s)v), \ptr_s d  B_s\rangle\r)^2-\int_0^{T\wedge \tau} | Q_s(\dot{k}(s)v)|^2 \,ds\r)\right].
  \end{align}
\end{theorem}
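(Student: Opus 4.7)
The plan is to evaluate the local martingale $M_t$ furnished by Theorem~\ref{th1} at the two boundary times $t=0$ and $t=T\wedge\tau$, take expectations, and then translate the resulting identity via the Markov property. At $t=0$, using $Q_0=\id$, $k(0)=1$, $W_0^k=0$, and the vanishing of the stochastic integrals, one obtains $M_0=(\Hess P_T f)(v,v)$. At $t=T\wedge\tau$, the continuity of $k$ together with $k(s)=0$ for $s\ge T\wedge\tau$ forces $k(T\wedge\tau)=0$, so $Q_{T\wedge\tau}(k(T\wedge\tau)v)=0$ and, by linearity of $W_t^k(v,\cdot)$ in its second argument, $W_{T\wedge\tau}^k(v,k(T\wedge\tau)v)=0$. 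The first three terms of $M_t$ in Theorem~\ref{th1} therefore disappear at $t=T\wedge\tau$, leaving exactly the two expressions with prefactor $P_{T-T\wedge\tau}f(X_{T\wedge\tau})$.

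Next I would upgrade $M_t$ to a genuine martingale on $[0,T\wedge\tau]$. First I would reduce to $f\in C_c^\infty(M)$ by the standard approximation, so that $\bd P_{T-t}f$ and $\Hess P_{T-t}f$ are smooth and bounded on $\bar D\times[0,T]$. Since $\tau\le\tau_D$ and $\bar D$ is compact, $|Q_s|$ is uniformly bounded; the hypothesis $\int_0^{T\wedge\tau}|\dot k(s)|^{2q}ds\in L^1$ handles the $Q_s(\dot k(s)v)$-integrals, and Lemma~\ref{th3} supplies the crucial bound $\E\int_0^T|W_s^k(v,\dot k(s)v)|^2\,ds<\infty$. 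These estimates, combined with a routine localization $\tau_n\nearrow T\wedge\tau$ and dominated convergence, give $\E[M_{T\wedge\tau}]=\E[M_0]=(\Hess P_T f)(v,v)$. Approximation of general $f\in\mathcal{B}_b(M)$ is recovered at the end by passing to the limit in both sides.

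Using the identity $\E[M_{T\wedge\tau}]=(\Hess P_T f)(v,v)$, I would next apply the tower property together with the Markov identity
\[
\E\bigl[f(X_T)\1_{\{T<\zeta(x)\}}\,\big|\,\SF_{T\wedge\tau}\bigr]=P_{T-T\wedge\tau}f(X_{T\wedge\tau})\1_{\{T\wedge\tau<\zeta(x)\}},
\]
while exploiting $\tau<\zeta(x)$ almost surely (since $\bar D$ is compact) to replace the factor $P_{T-T\wedge\tau}f(X_{T\wedge\tau})$ inside the expectation by $f(X_T)\1_{\{T<\zeta(x)\}}$; the remaining factors are $\SF_{T\wedge\tau}$-measurable, which legitimizes this substitution. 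Finally, plugging in the explicit expression
\[
W_s^k(v,\dot k(s)v)=Q_s\int_0^s Q_r^{-1}R(\ptr_r dB_r,Q_r(k(r)v))Q_r(\dot k(s)v)-\tfrac12 Q_s\int_0^s Q_r^{-1}(\bd^*R+\nabla\Ric)^{\sharp}(Q_r(k(r)v),Q_r(\dot k(s)v))\,dr
\]
splits the stochastic integral $-\int_0^{T\wedge\tau}\langle W_s^k(v,\dot k(s)v),\ptr_s dB_s\rangle$ into the Riemann tensor contribution and the $\bd^*R+\nabla\Ric$ contribution, producing precisely the three terms on the right-hand side of~\eqref{Hessian-formula-local}.

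The main obstacle is the martingale upgrade: because $M_t$ couples the possibly unbounded $\Hess P_{T-t}f$ and $\bd P_{T-t}f$ with the random objects $Q_t(k(t)v)$, $W_t^k(v,k(t)v)$ and two nested stochastic integrals, one must check integrability uniformly in $t\in[0,T\wedge\tau]$. The $L^2$-estimate in Lemma~\ref{th3} is tailor-made for this: it lets one absorb the dependence on the random time $\tau$ while the compactness of $\bar D$ (after reduction to $f\in C_c^\infty$) controls the deterministic quantities. The remaining work, namely splitting $W_s^k$ and applying the Markov property, is then bookkeeping.
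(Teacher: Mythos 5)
Your proposal is correct in outline and shares the main skeleton of the paper's proof (evaluate the local martingale from Theorem~\ref{th1} at $t=0$ and $t=T\wedge\tau$, upgrade to a true martingale, take expectations, apply the strong Markov property at $T\wedge\tau$, then split $W_s^k$ into its curvature and $(\bd^*R+\nabla\Ric)$ pieces). However, your regularization strategy differs from the paper's in a way worth flagging.

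The paper does \emph{not} regularize $f$. It instead introduces a family of test processes $k^{\eps}$ with $k^{\eps}(t)=0$ for $t\geq(T-\eps)\wedge\tau$ and applies Theorem~\ref{th1} with these. Because $k^{\eps}$ (and hence $k^{\eps}(t)$, $W_t^{k^{\eps}}(v,k^{\eps}(t)v)$, etc.) vanishes before time $T-\eps$, the only derivatives of the semigroup that ever appear carry a prefactor that is killed for $t\geq(T-\eps)\wedge\tau$; the boundedness of $|\bd P_tf|$ and $|\Hess P_tf|$ on $[\eps,T]\times D$ — a consequence merely of the smoothness of $P_{\bolddot}f$ on $(0,T]\times M$ for \emph{bounded measurable} $f$ — then suffices to make the local martingale a true one. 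Passing $\eps\downarrow 0$ recovers the formula for the given $k$. This avoids entirely the behavior of $\Hess P_sf$ and $\bd P_sf$ as $s\to 0^+$.

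You instead reduce to $f\in C_c^{\infty}(M)$ and then assert that $\bd P_{T-t}f$ and $\Hess P_{T-t}f$ are bounded on $\bar D\times[0,T]$. That is the one step in your argument that is not innocuous. The boundedness for $T-t\geq\eps$ is immediate, but the uniform bound as $T-t\to 0^+$ is a nontrivial regularity statement about the minimal heat semigroup on a general complete (possibly stochastically incomplete, possibly with unbounded curvature) manifold: one must show, e.g.\ via a local parametrix expansion near $\bar D\cup\operatorname{supp} f$, or by comparison with a compact model metric agreeing with $g$ on a neighborhood of $\bar D\cup\operatorname{supp} f$, that $\nabla P_sf\to\nabla f$ and $\Hess P_sf\to\Hess f$ locally uniformly as $s\to 0^+$. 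This is plausible and, I believe, provable, but it is not a one-line ``standard approximation''; it requires a genuine supporting lemma that the paper's $k^{\eps}$-device renders unnecessary. If you flesh out that lemma, your route goes through; it trades the paper's approximation in the test process $k$ for an approximation in the data $f$, and has the mild advantage of making the ``true martingale'' upgrade conceptually transparent, at the cost of a harder-to-justify regularity input. The remaining steps of your proposal (vanishing of the first three terms at $t=T\wedge\tau$ because $k(T\wedge\tau)=0$, the $\SF_{T\wedge\tau}$-conditioning, the use of $\tau<\zeta(x)$ a.s.\ and Lemma~\ref{th3}, and the final splitting of $W_s^k(v,\dot k(s)v)$) are correct and match the paper's reasoning.
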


\begin{proof}
  For $\eps>0$ small, let $k^{\eps}(t)$ be a bounded, non-negative and
  adapted process with path in the Cameron-Martin space
  $L^{1, 2q}([0,T]; \R)$ such that $k^{\eps}(t)=0$ for
  $t\geq (T-\eps)\wedge \tau$, $k^{\eps}(0)=1$. We know that
$$|Q_{t\wedge \tau}|\leq \e^{-\frac{1}{2}K_0(t\wedge \tau)}\quad \text{and}\quad \E\left[\int_0^T|W_t^{k^{\eps}}(v,\dot{k}^{\eps}(t)v)|^2\,dt\right]<\infty.$$
By the boundedness of $P_tf$ on $[0,T]\times D$ and the boundedness of
$|\bd P_tf|$ and $|\Hess P_tf|$ on $[\eps,T]\times D$ since $P{.}f$ is
smooth on $(0,T]\times M$, it follows first that the local martingale
in Theorem \ref{th1} is a true martingale for the chosen
$k^{\eps}$. By taking expectations, along with the strong Markov
property, we get the formula
\begin{align*}
  (\Hess  P_{T}f)(v,v)=
  &-\E^x\l[f(X_T)\1_{\{T<\zeta(x)\}}\int_0^{T\wedge \tau}\langle W^{k^{\eps}}_s(v,\dot{k}^{\eps}(s)v), \ptr_s d  B_s\rangle\r]\\
  &+\E^x\left[f(X_T)\1_{\{T<\zeta(x)\}}\l(\l(\int_0^{T\wedge \tau}\langle Q_s(\dot{k}^{\eps}(s)v), \ptr_s d  B_s\rangle\r)^2-\int_0^{T\wedge \tau}| Q_s(\dot{k}^{\eps}(s)v)|^2 \,ds\r)\right].
\end{align*}
Finally, through approximating the given $k$ by an appropriate
sequence of $k^{\eps}$ as above and letting $\eps\downarrow 0$, we
finish the proof.
\end{proof}

\begin{remark}
\begin{enumerate}[(i)]
\item In view of Eq.~\eqref{eqn}, formula
  \eqref{Hessian-formula-local} also can be written as
  \begin{align}\label{Hessian-formula-local-2}
    (\Hess  P_{T}f)(v,v)=
    &-\E^x\l[f(X_T)\1_{\{T<\zeta(x)\}}\int_0^{T\wedge \tau}\langle W^k_s(v,\dot{k}(s)v), \ptr_s d  B_s\rangle\r]\notag\\
    &+2\E^x\left[f(X_T)\1_{\{T<\zeta(x)\}}\int_0^{T\wedge \tau}\l( \int_0^{s}\langle Q_r(\dot{k}(r)v), \ptr_r d  B_r\rangle\r)\langle Q_s(\dot{k}(s)v), \ptr_s d  B_s\rangle\right].
  \end{align}
\item Since $ (\Hess P_{T}f)$ is a symmetric form, it is
  straightforward from Theorem \ref{th4} that
  \begin{align}\label{Hessian-formula-local-3}
    (\Hess  P_{T}f)(v,w)=
    &-\frac{1}{2}\E^x\l[f(X_T)\1_{\{T<\zeta(x)\}}\int_0^{T\wedge \tau}\langle W^k_s(v,\dot{k}(s)w), \ptr_s d  B_s\rangle\r]\notag\\
    &-\frac{1}{2}\E^x\l[f(X_T)\1_{\{T<\zeta(x)\}}\int_0^{T\wedge \tau}\langle W^k_s(w,\dot{k}(s)v), \ptr_s d  B_s\rangle\r] \notag \\
    &+\E^x\left[f(X_T)\1_{\{T<\zeta(x)\}}\l(\int_0^{T\wedge \tau}\langle Q_s(\dot{k}(s)v),
      \ptr_sd  B_s \rangle\r)\l(\int_0^{T\wedge \tau}\langle Q_s(\dot{k}(s)w),
      \ptr_sd  B_s \rangle\r)\right]\notag\\
    &-\E^x\left[f(X_T)\1_{\{T<\zeta(x)\}}\int_0^{T\wedge \tau} \langle  Q_s(\dot{k}(s)v), Q_s(\dot{k}(s)w) \rangle \,ds\right]
 \end{align}                              
 for $v,w\in T_xM$ and $x\in D$.
 \end{enumerate}                               
 \end{remark}

Denoting by $p_t(x,y)$ the transition density of the diffusion with
generator $L$, using Theorem \ref{th4}, we obtain the following
Bismut-type Hessian formula for the logarithmic density.

\begin{corollary}\label{cor-heat-kernel}
  We keep the assumptions of Theorem \ref{th4}. Then
  \begin{align*}
    &\l(\Hess_x \log p_T(x,y)\r)(v,v) \\
    &= -\E^x\l[\int_0^{T\wedge \zeta(x)}\langle W^k_s(v,\dot{k}(s)v), \ptr_s d  B_s\rangle\,\Big|\,X_T=y\r]+\E^x\left[\l(\int_0^{T\wedge \zeta(x)}\langle Q_s(\dot{k}(s)v), \ptr_s d  B_s\rangle\r)^2\,\Big|\,X_T=y\right]\\
    &\quad-\E^x\left[\int_0^{T\wedge \zeta(x)}|  Q_s(\dot{k}(s)v)|^2 \,ds\,\Big|\,X_T=y\right]-\l(\E^x\left[\int_0^{T\wedge \zeta(x)}\langle Q_s(\dot{k}(s)v), \ptr_s d  B_s\rangle\,\Big|\,X_T=y\right]\r)^2
  \end{align*}
\end{corollary}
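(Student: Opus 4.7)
The strategy is to derive $\Hess_x \log p_T(x,y)$ from the chain-rule identity
\[
(\Hess_x \log p_T(x,y))(v,v) = \frac{(\Hess_x p_T(x,y))(v,v)}{p_T(x,y)} - \l(\frac{(\nabla_x p_T(x,y))(v)}{p_T(x,y)}\r)^2,
\]
recovering each of the two right-hand terms as a conditional expectation given $\{X_T = y\}$.

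First, I would apply Theorem~\ref{th4} in the symmetric form \eqref{Hessian-formula-local-2}. Writing
\[
\Psi(v) := -\int_0^{T\wedge \tau}\langle W^k_s(v,\dot{k}(s)v), \ptr_s d B_s\rangle + \l(\int_0^{T\wedge \tau}\langle Q_s(\dot{k}(s)v), \ptr_s d B_s\rangle\r)^2 - \int_0^{T\wedge \tau}| Q_s(\dot{k}(s)v)|^2 \,ds,
\]
Theorem~\ref{th4} reads $(\Hess_x P_T f)(v,v) = \E^x[f(X_T)\1_{\{T<\zeta(x)\}}\Psi(v)]$. Combining this with $P_T f(x) = \int f(y)\,p_T(x,y)\,dy$ and disintegrating the sub-probability kernel $\P^x(X_T \in dy,\ T<\zeta(x)) = p_T(x,y)\,dy$ as $f$ ranges over $C_c^\infty(M)$, I would identify
\[
(\Hess_x p_T(x,y))(v,v) = p_T(x,y)\cdot \E^x[\Psi(v) \mid X_T = y]
\]
for almost every $y$. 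Since $\dot k$ and $W^k_s(v,\dot k(s)v)$ vanish for $s\geq T\wedge \tau$, the upper limit in $\Psi(v)$ may equivalently be written as $T\wedge \zeta(x)$ as in the statement. Strict positivity of $p_T(x,\cdot)$ on the connected complete manifold then allows division by $p_T(x,y)$, producing the first three terms of the corollary.

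Second, for the gradient I would exploit the same random function $k$ by setting $h(s) := 1 - k(s)$. Then $h(0) = 0$ and $h(s) = 1$ for $s \geq T \wedge \tau$, so $h$ is admissible in the Bismut gradient formula recalled in Section~\ref{s1}. This yields
\[
(\nabla_x P_T f)(v) = -\E^x\l[f(X_T)\1_{\{T<\zeta(x)\}}\int_0^{T\wedge \tau}\langle Q_s(\dot k(s) v), \ptr_s dB_s\rangle\r],
\]
and the same disintegration argument gives
\[
\frac{(\nabla_x p_T(x,y))(v)}{p_T(x,y)} = -\E^x\l[\int_0^{T\wedge \zeta(x)}\langle Q_s(\dot k(s) v), \ptr_s dB_s\rangle \,\Big|\, X_T = y\r].
\]
Squaring this identity and inserting into the chain-rule formula produces the final term in the statement (the minus sign squares away, then is subtracted), completing the derivation.

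The main technical obstacle is to justify the disintegration rigorously, i.e.\ to upgrade the identity of integrals against $f \in C_c^\infty(M)$ to an equality of conditional expectations. This requires uniform $L^1$-control of the functionals appearing in $\Psi(v)$ and in the gradient integrand. For the $W^k$-term, integrability is supplied by Lemma~\ref{th3}; for the $Q$-integrals, It\^o's isometry together with the bound $|Q_s| \leq \e^{-K_0 s/2}$ suffices. Smoothness of $(x,y)\mapsto p_T(x,y)$ and of the conditional law in $y$ is standard on a complete Riemannian manifold. Once these ingredients are assembled the assertion follows; no further probabilistic construction is needed beyond Theorem~\ref{th4} and the Bismut formula applied to the test function $h = 1-k$.
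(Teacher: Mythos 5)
The paper itself states Corollary~\ref{cor-heat-kernel} without proof, so there is no official argument to compare against; your reconstruction is the natural one and is correct. The chain-rule split
\[
(\Hess_x \log p_T(x,y))(v,v) = \frac{(\Hess_x p_T(x,y))(v,v)}{p_T(x,y)} - \l(\frac{(\nabla_x p_T(x,y))(v)}{p_T(x,y)}\r)^2
\]
matches the four-term structure of the statement: the first three terms come from disintegrating Theorem~\ref{th4}, and the final squared conditional expectation comes from the Bismut gradient formula of Section~\ref{s1}. The observation that $h = 1-k$ inherits the correct boundary conditions ($h(0)=0$, $h\equiv 1$ after $T\wedge\tau$) so that $\dot h = -\dot k$ and the sign is preserved under squaring is exactly the right bookkeeping, and your remark that the upper limit may be pushed from $T\wedge\tau$ to $T\wedge\zeta(x)$ because $\dot k$ (hence also $W^k_s(v,\dot k(s)v)=\dot k(s)\,W^k_s(v,v)$) vanishes beyond $T\wedge\tau$ is needed to reconcile with the way the corollary is written. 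The integrability needed for the disintegration is indeed supplied by Lemma~\ref{th3} for the $W^k$-term and by It\^o isometry plus $|Q_s|\le \e^{-K_0 s/2}$ for the $Q$-terms. No gap.
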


\begin{remark}
  Corollary \ref{cor-heat-kernel} is well adapted to determine the short-time
  behavior of the Hessian of the heat kernel on a non-compact
  manifold (see the procedure in \cite[Section 5]{CLW}).
\end{remark}

\section{Hessian estimates for Harmonic functions}\label{HF-section}

We now adjust Theorem \ref{th4} to the case of harmonic functions on
regular domains. Recall that by a regular domain we mean a connected
relatively compact open subset of $M$ with non-empty boundary. For
$D\subset M$ a regular domain and $f\in C^2(\bar{D})$, let
\begin{align*}
  |\Hess f|(x)=\sup\big\{ |(\Hess f)(v,w)|\colon |v|\leq 1, |w|\leq 1, \ v,w \in T_xM\big\},\quad x\in D.
\end{align*}
Since $\Hess f$ is symmetric, it is easy to see that
\begin{align*}
  |\Hess f|(x)=\sup\big\{ |(\Hess f)(v,v)|\colon |v|\leq 1,  \ v \in T_xM\big\},\quad x\in D.
\end{align*}

\begin{theorem}\label{them-harmonic}
Let $D\subset M$ be a regular domain, $u\in C^2(\bar{D})$ be 
harmonic on $D$, further $x\in D$ and $v,w\in T_xM$. Then for any
bounded 
adapted process $k$ with sample paths
in $L^{1,2q}(\mathbb{R}^+;\mathbb{R})$ for some $q>1$ such that 
\begin{align*}
\int_0^{\tau_D}|\dot{k}(s)|^{2q}\,ds\in L^{1},
\end{align*} 
 and the property that $k(0)=0$ and $k(s)=1$ for all 
$s\geq \tau_D$, the following formula holds:
\begin{align*}
&(\Hess\,u)_x (v,v)\\
  &=-\E^x\l[u(X_{\tau_D}(x))\int_0^{\tau_D}\langle W^k_s(v,\dot{k}(s)v), \ptr_s d  B_s\rangle\r]\\
&\quad+\E^x\left[u(X_{\tau_D}(x))\l\{\l(\int_0^{\tau_D}\langle Q_s(\dot{k}(s)v), \ptr_s d  B_s\rangle\r)^2-\int_0^{\tau_D}  |Q_s(\dot{k}(s)v)|^2\,ds\r\}\right].
\end{align*}

\begin{proof}
Let $(P_t^Du)(x):=\E[u(X_{t\wedge\tau_D}(x))]$. Since $u$ is harmonic, we have 
$P^D_tu=u$ for $t\geq 0$, and then analogously to Theorem  \ref{th4},
\begin{align*}
(\Hess&\,u)_x (v,v)=(\Hess \,P^D_t u)_x (v,v)\\
=&-\E^x\l[u(X_{t\wedge \tau_D}(x))\int_0^{t\wedge\tau_D}\langle W^k_s(v,\dot{k}(s)v), \ptr_s d  B_s\rangle\r]\\
&+\E^x\left[u(X_{t\wedge \tau_D}(x))\l(\int_0^{t\wedge\tau_D}\langle Q_s(\dot{k}(s)v), \ptr_s d  B_s\rangle\r)^2\right]\\
&-\E^x\left[u(X_{t\wedge \tau_D}(x))\int_0^{t\wedge \tau_D}|Q_s(\dot{k}(s)v)|^2 \,ds\right].
\end{align*}
By letting $t$ tend to infinity, we obtain the claim.
\end{proof}

\end{theorem}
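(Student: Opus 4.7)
The plan is to identify the harmonic function $u$ with its own Dirichlet heat semigroup and then invoke a stopped-domain analog of Theorem~\ref{th4}. Define $(P_t^D u)(x) := \E^x[u(X_{t\wedge \tau_D}(x))]$. Since $u \in C^2(\bar D)$ is harmonic on $D$, It\^o's formula shows that $u(X_{\cdot \wedge \tau_D})$ is a local martingale; as $u$ is bounded on the compact set $\bar D$, it is in fact a true martingale, so $P_t^D u \equiv u$ for every $t \geq 0$.

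The first step is to re-run the local martingale construction of Theorem~\ref{th1}, but with the stopping time $\tau_D$ in place of $\tau$ and with $P_{T-s}f(X_s)$ replaced by $u(X_{s \wedge \tau_D})$. The key substitute for Eq.~\eqref{Pt-martingale} is
\begin{equation*}
u(X_{t \wedge \tau_D}) = u(x) + \int_0^{t\wedge \tau_D}\langle \bd u, \ptr_s\,d B_s\rangle,
\end{equation*}
which holds because $u$ is harmonic; and the derivatives $\bd u$, $\Hess u$ along the stopped path play the structural role of $\bd P_{T-s}f$, $\Hess P_{T-s}f$ in the earlier argument. Since the sample paths stay inside the compact set $\bar D$ up to $\tau_D$, all relevant derivatives are bounded there, and the resulting local martingale is in fact a true martingale on $[0, t\wedge \tau_D]$. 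Running through the same chain of identities as in the proof of Theorem~\ref{th1} and then as in Theorem~\ref{th4}, with the boundary conditions on $k$ adjusted to the convention $k(0)=0$, $k(\tau_D)=1$, yields a formula expressing $(\Hess P_t^D u)(v,v) = (\Hess u)(v,v)$ as an expectation of integrals from $0$ to $t \wedge \tau_D$.

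Finally, I would let $t \to \infty$. Since $D$ is a regular domain, $\tau_D < \infty$ almost surely, so $t \wedge \tau_D \nearrow \tau_D$ and $u(X_{t\wedge \tau_D}) \to u(X_{\tau_D})$ a.s. The uniform bound $|u(X_{t\wedge\tau_D})| \leq \|u\|_{\bar D}$, together with the integrability hypothesis $\int_0^{\tau_D}|\dot k(s)|^{2q}\,ds \in L^1$ and Lemma~\ref{th3} (whose proof only uses boundedness of $k$, not its specific boundary values), supply uniform $L^2$ control of all the stochastic integrals that appear. Dominated convergence then delivers the claimed formula. The main obstacle sits in the second step: the convention on $k$ is reversed from Theorem~\ref{th4}, so one must verify that the chain of integration-by-parts manipulations in the proof of Theorem~\ref{th1} still produces exactly the right-hand side of the statement once the boundary contributions at $s=0$ (where now $k(0)=0$) and at $s=t\wedge \tau_D$ are carefully tracked under the new convention.
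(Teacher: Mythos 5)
Your skeleton matches the paper's proof precisely: identify $u$ with its own Dirichlet semigroup via $P_t^D u=u$, apply a stopped analog of Theorem~\ref{th4} with $\tau=\tau_D$ and $u(X_{\cdot\wedge\tau_D})$ in place of $P_{T-\cdot}f(X_{\cdot})$, using the stopped martingale $u(X_{t\wedge\tau_D})=u(x)+\int_0^{t\wedge\tau_D}\langle\nabla u,\ptr_s\,dB_s\rangle$ as the substitute for \eqref{Pt-martingale}, and then let $t\to\infty$ with dominated convergence backed by the bound $|u(X_{t\wedge\tau_D})|\le\|u\|_{\bar D}$ and the $L^1$ integrability of $\int_0^{\tau_D}|\dot k|^{2q}\,ds$ together with Lemma~\ref{th3}. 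That is exactly what the paper does, and more explicitly than the paper's terse ``analogously to Theorem~\ref{th4} \dots by letting $t$ tend to infinity.''

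However, the concern you flag at the end is not a detail to ``carefully track''; it is a genuine inconsistency that you must resolve, not work around. With $k(0)=0$ and $k(s)=1$ for $s\ge\tau_D$, the local martingale of Theorem~\ref{th1} vanishes at $t=0$ (since $Q_0(k(0)v)=0$ and $W^k_0=0$), while at $t=\tau_D$ the first three terms survive and leave $\Hess u$ and $\nabla u$ evaluated at $X_{\tau_D}\in\partial D$; the optional stopping identity then reads $0=\E[M_{\tau_D}]$, which is \emph{not} the displayed formula for $(\Hess u)_x(v,v)$. The martingale argument produces the claimed identity only with $k(0)=1$ and $k(s)=0$ for $s\ge\tau_D$: then $M_0=(\Hess u)_x(v,v)$ and at $\tau_D$ the first three terms drop out. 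The stated boundary conditions in Theorem~\ref{them-harmonic} are a misprint: the hypotheses of Theorem~\ref{th4} have $k(0)=1$, $k=0$ eventually, and the proof of Theorem~\ref{them-1} constructs $k(s)=1-(h_1\circ h_0)(s)$ with $h_1(0)=0$, $h_1(t)=1$, hence $k(0)=1$ and $k\to 0$. So rather than trying to adapt the chain of identities to the reversed convention (it will not close), you should correct the convention to match Theorem~\ref{th4} and then your argument goes through exactly as the paper's does.
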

\begin{remark}
Taking Eq.~\eqref{eqn} into account, the above formula can be rewritten as follows:
\begin{align}\label{Hessian-harmonic-function}
(\Hess\,u)_x (v,v)
 =&-\E^x\l[u(X_{\tau_D}(x))\int_0^{\tau_D}\langle W^k_s(v,\dot{k}(s)v), \ptr_s d  B_s\rangle\r]\notag\\
&+2\E^x\left[u(X_{\tau_D}(x))\int_0^{\tau_D} \l(\int_0^{s}\langle Q_r(\dot{k}(r)v), \ptr_r d  B_r\rangle\r) \langle Q_s(\dot{k}(s)v), \ptr_s d  B_s\rangle\right].
\end{align}
\end{remark}

\subsection{Construction of the random function $k$}
We are now going to briefly sketch the method from
\cite{TW98,Th-Wang:2011} to construct the function $k$. We
restrict ourselves to Brownian motion on $D$ with lifetime $\tau_D$
(exit time from~$D$).  Let $f$ be a continuous function defined on
$\bar D$ which is strictly positive on $D$.  For $x\in D$, we consider
the strictly increasing process
\begin{align}\label{random-time1}
  T(t)=\int_0^tf^{-2}(X_s(x))\,ds,\quad t\leq \tau_D,
\end{align}
and let
\begin{align}\label{random-time2}
  \tau(t)=\inf\{s\geq 0: T(s)\geq t\}.
\end{align}
Obviously $T(\tau(t))=t$ since $\tau_D<\infty$, and $\tau(T(t))=t$ for
$t\leq \tau_D$.  Note that since $X$ is a diffusion with generator $\frac{1}{2}\Delta$,
the time-changed diffusion
$X_t':=X_{\tau(t)}$ has generator $L'=\frac{1}{2}f^2\Delta$. In
particular, the lifetime $T(\tau_D)$ of $X'$ is infinite, see \cite{TW98}.

Fix $t>0$ and let 
\begin{align*}
h_0(s)=\int_0^sf^{-2}(X_r(x)){1}_{\{r<\tau(t)\}}\,dr,\quad s\geq 0.
\end{align*}
Then for $s\geq \tau(t)$,
\begin{align*}
h_0(s)=h_0(\tau(t))=\int_0^{\tau(t)}f^{-2}(X_r(x))\,dr=t.
\end{align*}
Next, let $h_1\in C^1([0,t],\mathbb{R})$ with $h_1(0)=0$ and
$h_1(t)=1$. We consider $k(s)=1-(h_1\circ h_0)(s)$. Then $k$ is an
adapted bounded process with $k(0)=1$ and $k(s)=0$ for
$s\geq \tau(t)$.

\begin{lemma}\label{lem:est}
Suppose $f \in C^2(\ovl{D})$ with $0<f\leq 1$ on $D$, $f(x)=1$ and $f\vert_{\partial D} = 0$. For $\delta_1, \delta_2>0$ and $q>1$ set
\begin{align}
c_1(f)&:=\sup_D\l\{\l((2\delta_2)^{-1}+K_0^-\r)f^2+(3+\delta_1^{-1})|\nabla f|^2-f\Delta f\r\};\label{def-c1} \\
c_2(f)&:=\sup_D\l\{K_0^-f^2+5|\nabla f|^2-f\Delta f\r\};\label{def-c2}\\
c_3(f)&:=\sup_D\l\{K_0^-f^2+3|\nabla f|^2-f\Delta f\r\};\label{def-c3}\\
\tilde{c}_q(f) &:= \sup_D \Big\lbrace (2q+1)|\nabla f|^2 - f \Delta f \Big\rbrace \label{def-c4}
\end{align}
where $K_0$ is defined as in \eqref{Ric-D}.  Then,
\begin{align*}
c_1(f)&\leq  \frac1{2\delta_2}+K_0^-+\frac{\pi \sqrt{(n-1)K^-_0}}{2 \delta_x} + \frac{\pi^2 (n+3+\delta_1^{-1})}{4\delta_x^2}:=c_1,\\
c_2(f)&\leq K_0^-+\frac{\pi \sqrt{(n-1)K^-_0}}{2 \delta_x} + \frac{\pi^2 (n+5)}{4\delta_x^2}:=c_2,\\
c_3(f)&\leq K_0^-+\frac{\pi \sqrt{(n-1)K^-_0}}{2 \delta_x} + \frac{\pi^2 (n+3)}{4\delta_x^2}:=c_3,\\
\tilde{c}_q(f)& \leq \frac{\pi \sqrt{(n-1)K^-_0}}{2 \delta_x} + \frac{\pi^2 (n+2q+1)}{4\delta_x^2}:=\tilde{c}_q,
\end{align*}
where $\delta_x:=\rho(x,\partial D)$ denotes the Riemannian distance of $x$
to the boundary $\partial D$.
\end{lemma}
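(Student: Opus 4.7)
The plan is to exhibit one explicit test function $f$ that simultaneously realizes all four bounds. Guided by the coefficient $\pi^2/(4\delta_x^2)$ appearing in the conclusions, the natural candidate is the cosine barrier built from the distance to $x$,
\[
f(y) = \cos\!\left(\frac{\pi\,\rho(y,x)}{2\delta_x}\right),
\]
which satisfies $f(x)=1$, $0<f\le 1$ on the open geodesic ball $B(x,\delta_x)$, and vanishes on $\partial B(x,\delta_x)$. Since $B(x,\delta_x)\subset D$ and the constants $c_i(f)$ are defined as pointwise suprema, it suffices to verify the bounds on $B(x,\delta_x)$; the required global regularity $f\in C^2(\bar D)$ can be arranged by an arbitrarily small $C^2$ mollification extending $f$ by zero outside the ball, which does not affect the suprema.

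With $\theta:=\pi\rho(\cdot,x)/(2\delta_x)$ and the identity $|\nabla\rho|=1$, the chain rule yields, off the cut locus of $x$,
\[
|\nabla f|^2=\frac{\pi^2}{4\delta_x^2}\sin^2\theta,\qquad
\Delta f=-\frac{\pi}{2\delta_x}\sin\theta\,\Delta\rho-\frac{\pi^2}{4\delta_x^2}\cos\theta.
\]
The only geometric input beyond this is the Laplacian comparison theorem: since $\mathrm{Ric}\ge K_0$ on $D$, one has
\[
\Delta\rho\le\sqrt{(n-1)K_0^-}\,\coth\!\left(\sqrt{K_0^-/(n-1)}\,\rho\right)\le\sqrt{(n-1)K_0^-}+\frac{n-1}{\rho},
\]
where the second step uses the elementary inequality $\coth(y)\le 1+1/y$ valid for all $y>0$.

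Combining these with the trivial bounds $\sin\theta\cos\theta\le 1$, $\cos\theta\le 1$, and $\sin\theta/\rho\le\pi/(2\delta_x)$ (from $\sin\theta\le\theta$), a short calculation produces the uniform pointwise estimate
\[
c\,|\nabla f|^2-f\,\Delta f\le\frac{(n+c)\pi^2}{4\delta_x^2}+\frac{\pi\sqrt{(n-1)K_0^-}}{2\delta_x},
\]
valid for every constant $c\ge 0$. Adding $a\,f^2\le a$ with the four pairs $(a,c)$, namely $\bigl((2\delta_2)^{-1}+K_0^-,\;3+\delta_1^{-1}\bigr)$, $(K_0^-,5)$, $(K_0^-,3)$, and $(0,2q+1)$, then reads off the four required inequalities $c_1(f)\le c_1$, $c_2(f)\le c_2$, $c_3(f)\le c_3$, and $\tilde c_q(f)\le\tilde c_q$ in one stroke.

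The main obstacle is purely technical: $\rho(\cdot,x)$, and hence $f$, is not smooth on the cut locus of $x$, so $f$ does not literally lie in $C^2(\bar D)$ when the cut locus meets $B(x,\delta_x)$. I would handle this in the customary way by invoking the barrier (viscosity) version of Laplacian comparison, under which $\Delta\rho\le\sqrt{(n-1)K_0^-}+(n-1)/\rho$ persists as an upper distributional bound on all of $B(x,\delta_x)\setminus\{x\}$, and then transferring this to a pointwise inequality via a $C^2$ smooth approximation $f_\varepsilon$ of $f$. Since every bound above is pointwise and uses only $|\nabla f|$, $f$, and upper bounds on $-f\Delta f$, the inequalities pass to the $\varepsilon\downarrow 0$ limit with no loss of constants. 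The single point $y=x$ causes no difficulty because $\cos(\pi\rho/(2\delta_x))=1-\pi^2\rho^2/(8\delta_x^2)+O(\rho^4)$ is an even power series in $\rho$ and therefore smooth at $x$.
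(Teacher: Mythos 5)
Your proof is correct and takes essentially the same route as the paper: both use the barrier $f=\cos\bigl(\pi\rho(\cdot,x)/(2\delta_x)\bigr)$ on $B(x,\delta_x)$, the gradient bound $|\nabla f|\le \pi/(2\delta_x)$, and Laplacian comparison (together with $\coth y\le 1+1/y$ and $\sin\theta/\rho\le\pi/(2\delta_x)$) to get $-f\Delta f\le \frac{\pi\sqrt{(n-1)K_0^-}}{2\delta_x}+\frac{n\pi^2}{4\delta_x^2}$, from which the four stated bounds follow by the same bookkeeping with the pairs $(a,c)$. The only difference is that you explicitly address the cut-locus regularity of $\rho(\cdot,x)$ via a barrier/mollification argument, a technicality the paper's two-line proof leaves implicit.
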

\begin{proof}
Take $$f(y)=\sin{\l(\frac{\pi\rho(y,\partial D)}{2}\r)},\quad y\in B(x,\delta_x).$$
This choice of $f$ clearly satisfies the conditions and furthermore,
\begin{align}\label{esti-grad-f}
|\nabla f|\leq \frac{\pi}{2\delta_x}.
\end{align}
By the Laplacian comparison theorem, we have
\begin{align*}
-\Delta f\leq \frac{\pi}{2\delta_x}\sqrt{(n-1)K_0^-}+\frac{\pi^2 n}{4\delta_x^2}
\end{align*}
which together with \eqref{esti-grad-f}  gives the claimed estimates on $\tilde{c}_q(f),c_1(f), c_2(f)$ and $ c_3(f)$.
\end{proof}
The following lemma is taken from \cite[Lemma 2.2]{CTT18} or \cite{TW98} with a trivial modification.

\begin{lemma}\label{lem-1}
Suppose $f \in C^2(\ovl{D})$ with $f >0$ and $f \leq 1$ on $D$, $f(x)=1$, $f\vert_{\partial D} = 0$ and set
$\tilde{c}_q(f)$ as in \eqref{def-c4}
for $q\geq 1$. Then there exists a bounded adapted process $k$ with paths in the Cameron-Martin space
$L^{1,2q}([0,t];\mathbb{R})$ for some $q\in [1,\infty)$ such that $k(0)=1$, $k(s)=0$ for $s\geq t\wedge \tau_D$ and 
\begin{align*}
\E\left[\int_0^{t\wedge \tau_D}|\dot{k}(s)|^{2q}\,ds\right]\leq \frac{\tilde{c}_q(f)}{1-\e^{-\tilde{c}_q(f)t}}\leq \frac{\e^{\tilde{c}_qt}}{t},
\end{align*}
where $\tilde{c}_q$ is defined in Lemma \ref{lem:est}.
\end{lemma}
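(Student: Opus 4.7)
The plan is to carry out the random time-change construction of Section~\ref{HF-section} with an explicit choice of the deterministic function $h_1$, and then bound the $L^{2q}$-norm of $\dot k$ by reducing, via the time change $u=h_0(s)$, to an It\^o-type estimate on the time-changed process $X'$.

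First, I would set $k(s):=1-h_1(h_0(s))$ with $h_1\in C^1([0,t],\R)$ satisfying $h_1(0)=0$ and $h_1(t)=1$, to be specified later. Since $f\leq 1$ on $D$ implies $T(s)\geq s$, we have $\tau(t)\leq t\wedge\tau_D$, so the boundary conditions $k(0)=1$ and $k\equiv 0$ on $[t\wedge\tau_D,\infty)$ hold automatically. Differentiating gives $\dot k(s)=-\dot h_1(h_0(s))f^{-2}(X_s)\,\1_{\{s<\tau(t)\}}$; applying the change of variable $u=h_0(s)$ (so that $ds=f^2(X'_u)\,du$ with $X'_u:=X_{\tau(u)}$) then yields
\begin{align*}
\E\Big[\int_0^{t\wedge\tau_D}|\dot k(s)|^{2q}\,ds\Big]=\int_0^t|\dot h_1(u)|^{2q}\,\E\big[f^{-(4q-2)}(X'_u)\big]\,du.
\end{align*}

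Next, since $X'$ has generator $L'=\tfrac12 f^2\Delta$, the direct computation $L'f^{-m}=\tfrac{m}{2}\big((m+1)|\nabla f|^2-f\Delta f\big)f^{-m}$ combined with It\^o's formula and Gronwall's inequality produces an exponential bound on $\E[f^{-m}(X'_u)]$ in terms of $\tilde c_q(f)$, with the normalization $f(x)=1$ eliminating the prefactor. I would then plug this bound back in and choose $h_1$ of exponential form, $h_1(u)=(1-e^{-\tilde c_q(f)u})/(1-e^{-\tilde c_q(f)t})$; this is the minimizer (by calculus of variations, or equivalently H\"older's inequality) of $\int_0^t|\dot h_1|^{2q}e^{Au}\,du$ against an exponential weight, and substituting it collapses the right-hand side to the claimed $\tilde c_q(f)/(1-e^{-\tilde c_q(f)t})$. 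The weaker bound $e^{\tilde c_q t}/t$ then follows from the elementary inequality $1-e^{-x}\geq xe^{-x}$ for $x\geq 0$.

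The main obstacle is aligning the exponents: the generator identity produces the coefficient $(m+1)|\nabla f|^2-f\Delta f$ with $m=4q-2$, whereas $\tilde c_q(f)=(2q+1)|\nabla f|^2-f\Delta f$ corresponds to $m=2q$. Bridging the two requires a H\"older-type rearrangement of the integrand (before the time change) that reduces the effective power of $f^{-1}$ from $4q-2$ to $2q$ at the expense of a controllable deterministic factor; this is precisely the ``trivial modification'' alluded to when citing \cite{CTT18,TW98}.
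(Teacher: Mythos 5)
You set up the time-change framework exactly as the paper does, and the change of variables
\begin{align*}
\E\Big[\int_0^{t\wedge\tau_D}|\dot k(s)|^{2q}\,ds\Big]=\int_0^t|\dot h_1(u)|^{2q}\,\E\big[f^{-(4q-2)}(X'_u)\big]\,du
\end{align*}
is correct; this is indeed the route of \cite{TW98,CTT18}. But for $q>1$ the argument as written has two genuine gaps.

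First, the exponent mismatch you flag at the end is not a ``trivial modification''. After the time change the power of $f^{-1}$ is $m=4q-2$, whereas $\tilde c_q(f)$ matches $m=2q$. You propose to reduce the effective power from $4q-2$ to $2q$ by a H\"older rearrangement, but since $f\le 1$ on $D$ and $4q-2>2q$ for $q>1$, one has $f^{-(4q-2)}\ge f^{-2q}$ pointwise, so the inequality goes the wrong way. The Gronwall step with $m=4q-2$ actually produces $\E[f^{-(4q-2)}(X'_u)]\le\e^{Au}$ with $A=(2q-1)\sup_D\{(4q-1)|\nabla f|^2-f\Delta f\}$, and this rate is strictly larger than $\tilde c_q(f)$ once $q>1$.

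Second, even if the rate matched, the claim that your exponential $h_1$ ``collapses the right-hand side to $\tilde c_q(f)/(1-\e^{-\tilde c_q(f)t})$'' is incorrect when $q>1$. The H\"older (or Euler--Lagrange) minimizer of $\int_0^t|\dot h_1|^{2q}\e^{Au}\,du$ over $h_1(0)=0$, $h_1(t)=1$ is $h_1(u)=(1-\e^{-Au/(2q-1)})/(1-\e^{-At/(2q-1)})$, with minimal value $\bigl(\tfrac{A/(2q-1)}{1-\e^{-At/(2q-1)}}\bigr)^{2q-1}$, a $(2q-1)$-th power of the ratio and not a linear expression in it. This is not cosmetic: on $\{\tau_D>t\}$, H\"older applied to $1=|k(0)-k(t)|\le\int_0^t|\dot k|\,ds$ forces $\int_0^t|\dot k|^{2q}\,ds\ge t^{-(2q-1)}$, so for small $t$ and $q>1$ no admissible $k$ can achieve $\E\int_0^{t\wedge\tau_D}|\dot k|^{2q}\,ds\le\e^{\tilde c_q t}/t\sim t^{-1}$. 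What your construction actually delivers is a bound of the form $\bigl(\tfrac{B}{1-\e^{-Bt}}\bigr)^{2q-1}$ for the appropriate $B$, and that power $2q-1$ has to be carried through rather than dropped.
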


\subsection{Estimate for the Hessian of a harmonic function}

In this subsection we focus on explicit Hessian estimates for harmonic functions defined
on regular domains in Riemannian manifolds.

\begin{theorem}\label{them-1}
Let $D\subset M$ be a regular domain. For $x\in D$, let $\delta_x=\rho(x,\partial D)$ be the Riemannian distance of $x$ to the boundary of $D$.
For a bounded positive harmonic function $u$ on $D$, one has
\begin{align}
|\Hess\, u|(x)&\leq \inf_{\substack{0<\delta_1<1/2\\ \delta_2>0}}\l\{\sqrt{(1+2\1_{\{K_0^-\neq 0\}})\l((\delta_1+1)K_1^2+\frac{\delta_2}{2}K_2^2\r)},\r.\notag\\
&\quad +\sqrt{6}\,\Bigg(\frac{1}{2\delta_2}+3K_0^-+\frac{\pi \sqrt{(n-1)K^-_0}}{2 \delta_x} + \frac{\pi^2 (n+3+\delta_1^{-1})}{4\delta_x^2}\l.\Bigg)^{\mathstrut}\r\}\sqrt{\|u\|_D\,u(x)};\label{EHF-1}\\
&\notag \\
|\Hess\, u|(x)&\leq \inf_{\substack{\delta_1>0\\ \delta_2>0}}\l\{\sqrt{(1+2\1_{\{K_0^-\neq 0\}})\l((\delta_1+1)K_1^2+\frac{\delta_2}{2}K_2^2\r)}\notag\r.\\
&\quad +\Bigg(\frac{1}{\delta_2}+6K_0^-+\frac{\pi \sqrt{(n-1)K^-_0}}{ \delta_x} + \frac{\pi^2 (n+3+\delta_1^{-1})}{2\delta_x^2}\l.\Bigg)^{\mathstrut}\r\}\,\|u\|_D, \label{EHF-2}
\end{align}
where $K_0$, $K_1$ and $K_2$ are defined as in \eqref{Ric-D}--\eqref{dR-dRic}.
\end{theorem}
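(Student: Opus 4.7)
The plan is to apply the Bismut-type formula \eqref{Hessian-harmonic-function} to the harmonic function $u$:
\[
(\Hess u)_x(v,v) = -\E^x\bigl[u(X_{\tau_D})\, I_1\bigr] + 2\,\E^x\bigl[u(X_{\tau_D})\, I_2\bigr],
\]
where $I_1 := \int_0^{\tau_D}\langle W^k_s(v,\dot k(s)v), \ptr_s dB_s\rangle$, $I_2 := \int_0^{\tau_D} M_s \langle Q_s(\dot k(s)v), \ptr_s dB_s\rangle$, and $M_s := \int_0^s \langle Q_r(\dot k(r)v), \ptr_r dB_r\rangle$. For estimate \eqref{EHF-1}, since $u\geq 0$ and $u\leq \|u\|_D$, one has $u(X_{\tau_D})^2 \leq \|u\|_D\,u(X_{\tau_D})$; combined with the harmonicity identity $\E^x[u(X_{\tau_D})] = u(x)$ (optional stopping), Cauchy-Schwarz applied term-by-term yields
\[
|(\Hess u)_x(v,v)| \leq \sqrt{\|u\|_D\,u(x)}\,\bigl(\sqrt{\E[I_1^2]} + 2\sqrt{\E[I_2^2]}\bigr).
\]
For estimate \eqref{EHF-2}, I use instead the cruder bound $u(X_{\tau_D})\leq \|u\|_D$ and rewrite $2I_2$ via \eqref{eqn} as $M_{\tau_D}^2 - \langle M\rangle_{\tau_D}$, which is controlled in $L^1$-norm by $2\,\E[\langle M\rangle_{\tau_D}]$ since $\E[M^2_{\tau_D}]=\E[\langle M\rangle_{\tau_D}]$.

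The three moments $\E[I_1^2]$, $\E[I_2^2]$, and $\E[\langle M\rangle_{\tau_D}]$ reduce by It\^o isometry to expectations of the form $\E[\int_0^{\tau_D}\dot k(s)^2 \cdot (\cdot)\, ds]$ where the dot stands for $|W^k_s(v,v)|^2$, $M_s^2|Q_s v|^2$, or $|Q_s v|^2$, respectively. Using the pointwise bound $|Q_s|\leq e^{K_0^- s/2}$ together with the It\^o differential of $|W^k_s(v,v)|^2$ from the proof of Lemma~\ref{th3} with $p=1$, I apply Young's and Cauchy-Schwarz inequalities with two parameters $\delta_1,\delta_2$: $\delta_2$ to absorb the $K_2|W|$ cross term into an $|W|^2$ contribution, and $\delta_1$ to handle the separation of the $Q_s(k v)$ factor from $Q_s(v)$ in the remaining pieces (using $k\leq 1$). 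This yields the coefficient $(\delta_1+1)K_1^2+(\delta_2/2)K_2^2$; the prefactor $1+2\1_{\{K_0^-\neq 0\}}$ records the contribution of the exponential weight $e^{2K_0^- s}$, which is trivial when $K_0^-=0$ and otherwise produces an extra factor of three after Gr\"onwall-type integration.

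To close the bounds, I construct $k$ as in Section~3.1 with the choice $f(y) = \sin(\pi\rho(y,\partial D)/(2\delta_x))$ on $B(x,\delta_x)\subset D$: set $k(s) = 1-(h_1\circ h_0)(s)$ with $h_0(s)=\int_0^s f^{-2}(X_r)\,dr$ and $h_1$ as in Lemma~\ref{lem-1}. The $L^{2q}$-estimate on $\dot k$ and its weighted variants, together with Lemma~\ref{lem:est}, convert the intrinsic constants $\tilde c_q(f)$, $c_1(f)$, $c_2(f)$, $c_3(f)$ into the explicit $\delta_x$-dependent expressions $\pi\sqrt{(n-1)K_0^-}/(2\delta_x)$ and $\pi^2(n+3+\delta_1^{-1})/(4\delta_x^2)$ displayed in the statement. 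The factor $\sqrt{6}$ appearing in \eqref{EHF-1} (versus $2$ in \eqref{EHF-2}) traces back to the martingale identity $\E[(M_{\tau_D}^2-\langle M\rangle_{\tau_D})^2] = \frac{2}{3}\E[M_{\tau_D}^4] = 4\E[I_2^2]$ combined with a Cauchy-Schwarz bound $\E[M^2\langle M\rangle] \leq \sqrt{\E[M^4]\,\E[\langle M\rangle^2]}$, which sharpens the triangle inequality used to obtain \eqref{EHF-2}. Taking infima over $\delta_1\in(0,1/2)$ and $\delta_2>0$ then completes both proofs.

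The main obstacle will be the careful bookkeeping of constants through the nested Young's/Cauchy-Schwarz steps: they must be arranged so that the coefficient $(1+2\1_{\{K_0^-\neq 0\}})((\delta_1+1)K_1^2+(\delta_2/2)K_2^2)$ appears cleanly in front of a weighted $k$-integral that the construction of~$k$ bounds exactly by the $c_i(f)$ of Lemma~\ref{lem:est}, yielding the concise form displayed in \eqref{EHF-1}-\eqref{EHF-2}.
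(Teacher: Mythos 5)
Your high-level reduction is correct and matches the paper: apply Theorem~\ref{them-harmonic} (resp.\ its variant \eqref{Hessian-harmonic-function}), then use $u(X_{\tau_D})^2\leq\|u\|_D\,u(X_{\tau_D})$ together with $\E^x[u(X_{\tau_D})]=u(x)$ and Cauchy--Schwarz to get the prefactor $\sqrt{\|u\|_D\,u(x)}$ in \eqref{EHF-1}, and the cruder $u(X_{\tau_D})\leq\|u\|_D$ together with $\E[M^2_{\tau_D}]=\E[\langle M\rangle_{\tau_D}]$ and the triangle inequality to get \eqref{EHF-2}. The choice of $f(y)=\sin(\pi\rho(y,\partial D)/(2\delta_x))$ and Lemma~\ref{lem:est} is also the right final ingredient.

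However, the middle of your argument --- how the explicit constants actually arise --- has genuine gaps. First, you propose to bound $\E[\int_0^{\tau_D}\dot k(s)^2|W^k_s(v,v)|^2\,ds]$ by invoking the It\^o differential of $|W^k_s(v,v)|^2$ as in Lemma~\ref{th3} and a H\"older/$L^{2q}$ estimate on $\dot k$. That H\"older route (Lemma~\ref{th3} plus Lemma~\ref{lem-1}) is what the paper uses for the \emph{semigroup} estimate in Theorem~\ref{th-esti-HS}, but it does not give the clean constants claimed in Theorem~\ref{them-1}. Here $\dot k(s)=-\dot h_1(h_0(s))f^{-2}(X_s)$ is genuinely random and coupled to $|W^k_s|^2$ through the same driving noise; the paper decouples them via the time change $s\mapsto\tau(s)$, which produces $\int_0^t\dot h_1(s)^2\,\E^x[f^{-2}(X_{\tau(s)})|W^k_{\tau(s)}(v,v)|^2]\,ds$ with deterministic $\dot h_1$, and then applies It\^o's formula \emph{to the product} $f^{-2}(X_{\tau(s)})|W^k_{\tau(s)}(v,v)|^2$. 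Second, your attribution of $\delta_1$ is incorrect: it does not arise from separating $Q_s(kv)$ from $Q_s(v)$; it arises from Young's inequality applied to the cross term $2f^{-1}\langle R(\nabla f,Q(kv))Q(v),W^k\rangle$ that the It\^o calculus on $f^{-2}(X)|W^k|^2$ produces --- this is exactly why $\delta_1^{-1}$ multiplies $|\nabla f|^2$ in the definition of $c_1(f)$ in \eqref{def-c1}. Third, your explanation of $\sqrt6$ via $\E[(M^2_{\tau_D}-\langle M\rangle_{\tau_D})^2]=\tfrac23\E[M^4_{\tau_D}]$ is not valid: that identity is special to Gaussian increments (Brownian motion on a fixed time interval) and does not hold for the stopped martingale here. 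In the paper, $\sqrt6$ comes from $2\cdot\sqrt3\cdot(1/\sqrt2)$, where the $3$ is the coefficient that appears when bounding $d\big(f^{-2}(X_{\tau(s)})\e^{-K_0\tau(s)}M_{\tau(s)}^2\big)$ (one unit from $d\langle M\rangle$ and two from absorbing a cross term with $\nabla f$), the $1/2$ from the iterated-integral identity $\int_0^t g(s)\int_0^s g\,du\,ds=\tfrac12(\int_0^t g)^2$, and the $2$ from the front of term II. These It\^o computations on the $f$-weighted quantities $\Phi_1$ and $\Phi_2$ are the mechanism by which the constants $c_1(f), c_2(f), c_3(f)$ of Lemma~\ref{lem:est} enter; without them, the constants in the final statement would not materialize.
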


\begin{proof}[Proof of Theorem \ref{them-1} (Part I)]
  We fix $x\in D$ and denote by $B=B(x,\delta_x)$ the open ball about
  $x$ of radius~$\delta_x$. Then Theorem \ref{them-harmonic} applies
  with $B$ now playing the role of $D$.  First, by formula
  \eqref{Hessian-harmonic-function} and the Cauchy inequality, we get
\begin{align}\label{eq-Hess-u}
&\big|(\Hess\,u)_x(v,v)\big|=\lim_{t\rightarrow \infty}\big|(\Hess \, P^D_tu)_x (v,v)\big|\notag\\
&\leq \left|\E^x\l[u(X_{\tau_D}(x))\int_0^{\tau_D}\langle W^k_s(v,\dot{k}(s)v), \ptr_s d  B_s\rangle\r]\,\right|\notag\\
&\quad +2\left|\E^x\left[u(X_{\tau_D}(x))\int_0^{\tau_D}\l( \int_0^{s}\langle Q_r(\dot{k}(r)v), \ptr_r d  B_r\rangle\r)\langle Q_s(\dot{k}(s)v), \ptr_s d  B_s\rangle\right]\,\right|\notag\\
&\leq   \sqrt{\|u\|_D\,u(x)}\sqrt{\lim_{t\rightarrow \infty}\E^x\left[\int_0^{\tau(t)}|W^k_s(v,v)|^2\dot{k}(s)^2\,ds\right]}\notag\\
&\quad+2\sqrt{\|u\|_D\,u(x)}\sqrt{\lim_{t\rightarrow \infty}\E^x\l[\int_0^{\tau(t)} \l(\int_0^{s}\langle Q_r(\dot{k}(r)v), \ptr_r d  B_r\rangle\r)^2 | Q_s(\dot{k}(s)v)|^2 \,ds\r]}:=\text{I}+\text{II},
\end{align}
where the stopping time $\tau(t)$  is defined by \eqref{random-time2}. 
For the first term I, according to the relation between $\tau(t)$ and~$f$, we can estimate as
\begin{align}
\E^x&\left[\int_0^{\tau(t)}|W^k_s(v,v)|^2\dot{k}(s)^2\,ds\right]\leq \E^x\left[\int_0^{\tau(t)}|W^k_s(v,v)|^2\dot{k}(s)^2\,ds\right]\notag\\
&\leq \E^x\left[\int_0^{\tau(t)}|W^k_s(v,v)|^2\dot{h}_1(h_0(s))^2 f^{-4}(X_{s})\,ds\right]\notag\\
&=\E^x\left[\int_0^{t}|W^k_{\tau(s)}(v,v)|^2\dot{h}_1(s)^2 f^{-4}(X_{\tau(s)})\,d\tau(s)\right]\notag\\
&=\int_0^{t}\dot{h}_1(s)^2\E^x\big[|W^k_{\tau(s)}(v,v)|^2 f^{-2}(X_{\tau(s)})\big]\,ds. \label{W-esti}
\end{align}
For the second term II, we find 
\begin{align*}
&\E^x\l[\int_0^{\tau(t)} \l(\int_0^{s}\langle Q_r(\dot{k}(r)v), \ptr_r d  B_r\rangle\r)^2 | Q_s(\dot{k}(s)v)|^2 \,ds\r]\\
  &\quad\leq \E^x\l[\int_0^{t}\dot{h}_1(s)^2f^{-2}(X_{\tau(s)})\e^{-K_0\tau(s)}\l(\int_0^{\tau(s)}\langle Q_r(\dot{k}(r)v), \ptr_r d  B_r\rangle\r)^2 ds\r].
\end{align*}
Substituting these estimates back into \eqref{eq-Hess-u} yields
\begin{align}\label{eq1}
  \big|(\Hess \,u)_x(v,v)\big|
  &\leq \sqrt{\|u\|_D\,u(x)}\sqrt{\lim_{t\rightarrow \infty}\int_0^{t}\dot{h}_1(s)^2\,\E^x\big[|W^k_{\tau(s)}(v,v)|^2f^{-2}(X_{\tau(s)})\big]\,ds}\notag\\
  &\quad +2\sqrt{\|u\|_D\,u(x)}\sqrt{\lim_{t\rightarrow \infty}\int_0^{t}\dot{h}_1(s)^2\,\E^x\l[f^{-2}(X_{\tau(s)})\e^{-K_0\tau(s)}\l(\int_0^{\tau(s)}\langle Q_r(\dot{k}(r)v), \ptr_r d  B_r\rangle\r)^2\r]\,ds}.
\end{align}
Let
\begin{align*}
  &\Phi_1(s):=\E^x\big[|W^k_{\tau(s)}(v,v)|^2f^{-2}(X_{\tau(s)})\big]; \\
  &\Phi_2(s):=\E^x\l[f^{-2}(X_{\tau(s)})\e^{-K_0\tau(s)}\l(\int_0^{\tau(s)}\langle Q_r(\dot{k}(r)v), \ptr_r d  B_r\rangle\r)^2\r].
\end{align*}
It\^{o}'s formula gives
\begin{align*}
  d &\l(f^{-2}(X_{\tau(s)})|W^k_{\tau(s)}(v,v)|^2\r)\\
    &=-2f^{-2}(X_{\tau(s)})|W^k_{\tau(s)}(v,v)|^2\langle \nabla f(X_{\tau(s)}), \ptr_{\tau(s)}\,dB_s \rangle \\
    &\quad +(3|\nabla f|^2-f\Delta f)f^{-2}(X_{\tau(s)})|W^k_{\tau(s)}(v,v)|^2\,ds\\
    &\quad +f^{-2}(X_{\tau(s)})\,d|W^k_{\tau(s)}(v,v)|^2\\
    &\quad + 2f^{-1}(X_{\tau(s)})\,\langle R(\nabla f, Q_{\tau(s)}(k(\tau(s))v))Q_{\tau(s)}(v),W^k_{\tau(s)}(v,v)\rangle\,ds.
\end{align*}
According to the definition of $W^k_s$, we thus obtain
\begin{align*}
  d&\l(f^{-2}(X_{\tau(s)})|W^k_{\tau(s)}(v,v)|^2\r)\\
   &=-2f^{-2}(X_{\tau(s)})|W^k_{\tau(s)}(v,v)|^2\langle \nabla f(X_{\tau(s)}), \ptr_{\tau(s)} dB_s \rangle \\
   &\quad +2f^{-1}(X_{\tau(s)})\langle R(\ptr_{\tau(s)}\,dB_s,Q_{\tau(s)}(k(\tau(s))v))Q_{\tau(s)}(v), W^k_{\tau(s)}(v,v) \rangle \\
   &\quad +(3|\nabla f|^2-f\Delta f)(X_{\tau(s)})f^{-2}(X_{\tau(s)})|W^k_{\tau(s)}(v,v)|^2\,ds\\
   &\quad + 2f^{-1}(X_{\tau(s)})\langle R(\nabla f, Q_{\tau(s)}(k(\tau(s))v))Q_{\tau(s)}(v),W^k_{\tau(s)}(v,v)\rangle\,ds\\
   &\quad +|R^{\sharp, \sharp}(Q_{\tau(s)}(k(\tau(s))v),Q_{\tau(s)}v)|_{\text{\tiny HS}}^2\,ds\\
   &\quad -\langle ({\bf d}^*R+\nabla {\rm Ric}^\sharp)(Q_{\tau(s)}(k(\tau(s))v), Q_{\tau(s)}(v)), W^k_{\tau(s)}(v,v) \rangle\,ds\\
   &\quad -{\rm Ric}(W^k_{\tau(s)}(v,v),W^k_{\tau(s)}(v,v))\,ds.
\end{align*}
Since all geometric quantities are bounded on $D$ and
$|Q_{\tau(s)}|\leq \exp\big(-\frac{1}{2}K_0\tau(s)\big)$, we have
\begin{align*}
d&\l(f^{-2}(X_{\tau(s)})|W^k_{\tau(s)}(v,v)|^2\r)\\
&\overset{\text{\tiny m}}{\leq }  \big(3|\nabla f|^2-f\Delta f\big)(X_{\tau(s)})f^{-2}(X_{\tau(s)})|W^k_{\tau(s)}(v,v)|^2\,ds\\
&\quad+2|R|(X_{\tau(s)}) \e^{-K_0\tau(s)}|W^k_{\tau(s)}(v,v)|\, |\nabla f|(X_{\tau(s)})f^{-1}(X_{\tau(s)})\,ds+|R|^2(X_{\tau(s)})\e^{-2K_0\tau(s)}\,ds\\
&\quad+K_2\e^{-K_0\tau(s)}|W^k_{\tau(s)}(v,v)|\,ds-K_0|W^k_{\tau(s)}(v,v)|^2\,ds.
\end{align*}
For any $\delta_1>0$ and $\delta_2>0$, it thus follows that 
\begin{align*}
d &\left(f^{-2}(X_{\tau(s)})|W^k_{\tau(s)}(v,v)|^2\right)\\
&\overset{\text{\tiny m}}{\leq }  \l(3|\nabla f|^2-f\Delta f\r)(X_{\tau(s)})f^{-2}(X_{\tau(s)})|W^k_{\tau(s)}(v,v)|^2\,ds\\
&\quad +(\delta_1+1)K_1^2 \e^{-2K_0\tau(s)}\,ds+\frac{1}{\delta_1}|W^k_{\tau(s)}(v,v)|^2\, |\nabla f|^2(X_{\tau(s)})f^{-2}(X_{\tau(s)})\,ds\\
&\quad+\frac{\delta_2}{2}K_2^2\e^{-2K_0\tau(s)}\,ds+\l(\frac{1}{2\delta_2}-K_0\r)|W^k_{\tau(s)}(v,v)|^2\,ds\\
&=\l[\l(\frac{1}{2\delta_2}-K_0\r)f^2+\l(3+\frac{1}{\delta_1}\r)|\nabla f|^2-f\Delta f\r]f^{-2}(X_{\tau(s)})|W^k_{\tau(s)}(v,v)|^2\,ds\\
&\quad+\l((\delta_1+1)K_1^2+\frac{\delta_2}{2}K_2^2\r)\e^{-2K_0\tau(s)}\,ds\\
 &\leq c_1(f)f^{-2}(X_{\tau(s)})|W^k_{\tau(s)}(v,v)|^2\,ds+\l((\delta_1+1)K_1^2+\frac{\delta_2}{2}K_2^2\r)\e^{-2K_0\tau(s)}\,ds,
\end{align*}
where
\begin{align*}
c_1(f):=\sup_D\l\{\l(\frac{1}{2\delta_2}+K_0^-\r)f^2+\l(3+\frac{1}{\delta_1}\r)|\nabla f|^2-(f\Delta f)\r\}.
\end{align*}
This implies
\begin{align*}
&d\left(\e^{-c_1(f) s}f^{-2}(X_{\tau(s)})|W^k_{\tau(s)}(v,v)|^2 \right)\\
&\quad\overset{\text{\tiny m}}{\leq}  \l((\delta_1+1)K_1^2+\frac{\delta_2}{2}K_2^2\r) \e^{-c_1(f) s}\e^{-2K_0\tau(s)}\,ds\\
&\quad\leq \l((\delta_1+1)K_1^2+\frac{\delta_2}{2}K_2^2\r) \e^{(2K_0^--c_1(f)) s}\,ds.
\end{align*}
By integrating from $0$ to $s$ and taking expectations, we then arrive at
\begin{align}\label{esti-w1}
\Phi_1(s)=\E^x\l[f^{-2}(X_{\tau(s)})|W^k_{\tau(s)}(v,v)|^2\r]\leq \l((\delta_1+1)K_1^2+\frac{\delta_2}{2}K_2^2\r) \e^{c_1(f)s}\int_0^s\e^{(2K_0^--c_1(f)) r}\,dr.
\end{align}
On the other hand, concerning $\Phi_2$, we first observe
\begin{align*}
&d \l(f^{-2}(X_{\tau(s)})\e^{-K_0\tau(s)}\l(\int_0^{\tau(s)}\langle Q_u(\dot{k}(u)v),\ptr_ud B_u \rangle\r)^2\r)\\
&=d \bigg[f^{-2}(X_{\tau(s)})\e^{-K_0\tau(s)}\l(\int_0^{s}\dot{h}_1(u)f^{-1}(X_{\tau(u)})\langle Q_{\tau(u)}(v),\ptr_{\tau(u)}d B_u \rangle\r)^2\bigg]\\
&\mequal \e^{-K_0 \tau(s)}\dot{h}_1(s)^2 f^{-4}(X_{\tau(s)}) \,\langle Q_{\tau(s)}(v), Q_{\tau(s)}(v) \rangle\,ds\\
&\quad -K_0\e^{-K_0\tau(s)}\l(\int_0^{s}\dot{h}_1(u)f^{-1}(X_{\tau(u)})\langle Q_{\tau(u)} (v), \ptr_{\tau(u)}dB_{u} \rangle\r)^2\,ds\\
&\quad -4\e^{-K_0\tau(s)}f^{-3}(X_{\tau(s)})\int_0^s\dot{h}_1(u)f^{-1}(X_{\tau(u)})\,\langle Q_{\tau(u)}(v), \ptr_{\tau(u)}dB_u\rangle \,\dot{h}_1(s)\langle \nabla f(X_{\tau(s)}), Q_{\tau(s)}(v)  \rangle ds\\
&\quad +(3|\nabla f|^2-f\Delta f)(X_{\tau(s)})f^{-2}(X_{\tau(s)})\e^{-K_0\tau(s)} 
\l(\int_0^{s}\dot{h}_1(u)f^{-1}(X_{\tau(u)})\langle Q_{\tau(u)} (v), \ptr_{\tau(u)}dB_{u} \rangle\r)^2 ds\\
&\leq   \e^{-2K_0 \tau(s)}\dot{h}_1(s)^2 f^{-4}(X_{\tau(s)}) \,ds\\
&\quad -K_0\e^{-K_0\tau(s)}\l(\int_0^{s}\dot{h}_1(u)f^{-1}(X_{\tau(u)})\langle Q_{\tau(u)} (v), \ptr_{\tau(u)}dB_{u} \rangle\r)^2\,ds\\
&\quad +2 f^{-2}(X_{\tau(s)})|\nabla f|^2(X_{\tau(s)})\e^{-K_0\tau(s)}\l(\int_0^s\dot{h}_1(u)f^{-1}(X_{\tau(u)})\langle Q_{\tau(u)}(v), \ptr_{\tau(u)}dB_u\rangle\r)^2 ds\\
&\quad+2  \e^{-2K_0\tau(s)}\dot{h}_1(s)^2 f^{-4}(X_{\tau(s)}) ds\\
&\quad +(3|\nabla f|^2-f\Delta f)(X_{\tau(s)})f^{-2}(X_{\tau(s)})\e^{-K_0\tau(s)} 
\l(\int_0^{s}\dot{h}_1(u)f^{-1}(X_{\tau(u)})\langle Q_{\tau(u)} (v), \ptr_{\tau(u)}dB_{u} \rangle\r)^2 ds
\end{align*}
and conclude
\begin{align*}
 &d \l(f^{-2}(X_{\tau(s)})\e^{-K_0\tau(s)}\l(\int_0^{s}\dot{h}_1(u)f^{-1}(X_{\tau(u)})\langle Q_{\tau(u)}(v),\ptr_{\tau(u)}d B_u \rangle\r)^2\r)\\
&\overset{\text{\tiny m}}{\leq} 3\e^{-2K_0 \tau(s)}\dot{h}_1(s)^2 f^{-4}(X_{\tau(s)}) \,ds\\
&\quad +\l(-K_0f^2+5|\nabla f|^2-f\Delta f\r)(X_{\tau(s)})f^{-2}(X_{\tau(s)})\e^{-K_0\tau(s)} 
\bigg (\int_0^{s}\dot{h}_1(u)f^{-1}(X_{\tau(u)})\langle Q_{\tau(u)} (v), \ptr_{\tau(u)}dB_{u} \rangle\bigg)^2 ds\\
&\leq 3\e^{-2K_0 \tau(s)}\dot{h}_1(s)^2 f^{-4}(X_{\tau(s)}) \,ds\\
&\quad +c_2(f)f^{-2}(X_{\tau(s)})\e^{-K_0\tau(s)} 
\bigg (\int_0^{s}\dot{h}_1(u)f^{-1}(X_{\tau(u)})\langle Q_{\tau(u)} (v), \ptr_{\tau(u)}dB_{u} \rangle\bigg)^2 ds,
\end{align*}
where $c_2(f)$ is defined in \eqref{def-c2}.
Integrating the above inequality from $0$ to $s$ then yields 
\begin{align}\label{estimate-Q1}
&\E^x\l[f^{-2}(X_{\tau(s)})\e^{-K_0\tau(s)}\l(\int_0^{s}\dot{h}_1(u)f^{-1}(X_{\tau(u)})\langle Q_{\tau(u)}(v),\ptr_{\tau(u)}d B_u \rangle\r)^2\r]\notag\\
&\leq 3\e^{c_2(f)s}\int_0^s\e^{-c_2(f)u}\dot{h}_1(u)^2\E^x\l[\e^{-2K_0\tau(u)} f^{-4}(X_{\tau(u)})\r]\,du.
\end{align}
Hence it remains to estimate the term $\E^x\l[\e^{-2K_0\tau(u)} f^{-4}(X_{\tau(u)})\r]$. By It\^{o}'s formula,
we have
\begin{align*}
d&\l(\e^{-2K_0\tau(u)}f^{-4}(X_{\tau(u)})\r)\\
&\mequal-2K_0\e^{-2K_0\tau(u)}f^{-2}(X_{\tau(u)})\,du +(-2f\Delta f+10|\nabla f|^2)(X_{\tau(u)}) \e^{-2K_0 \tau(u)}f^{-4}(X_{\tau(u)})\,du\\
&\leq 2 c_2(f)\e^{-2K_0\tau(u)}f^{-4}(X_{\tau(u)})\,du
\end{align*}
which implies 
\begin{align*}
\E^x\l[\e^{-2K_0\tau(u)}f^{-4}(X_{\tau(u)})\r]\leq f^{-4}(x)\e^{2c_2(f)u}.
\end{align*}
Substituting this estimate into \eqref{estimate-Q1} leads to
\begin{align*}
\Phi_2(s)&=\E^x\l[f^{-2}(X_{\tau(s)})\e^{-K_0\tau(s)}\l(\int_0^{s}\dot{h}_1(u)f^{-1}(X_{\tau(u)})\langle Q_{\tau(u)}(v),\ptr_{\tau(u)}d B_u \rangle\r)^2\r]\\
&\leq 3f^{-4}(x)\e^{c_2(f)s}\int_0^s\dot{h}_1(u)^2\e^{-c_2(f)u}\e^{2c_2(f)u}\,du\\
&\leq 3f^{-4}(x)\e^{c_2(f)s}\int_0^s\dot{h}_1(u)^2 \e^{c_2(f)u}\,du.
\end{align*}
By means of this inequality we can estimate the second integral on the right-hand side of \eqref{eq1}
to obtain
\begin{align*}
&\int_0^t \dot{h}_1(s)^2 \E^x\bigg[f^{-2}(X_{\tau(s)})\e^{-K_0\tau(s)}\l(\int_0^{s}\dot{h}_1(u)f^{-1}(X_{\tau(u)})\langle Q_{\tau(u)}(v),\ptr_{\tau(u)}d B_u \rangle\r)^2\bigg]\,ds\\
&\leq3f^{-4}(x)\int_0^t\dot{h}_1(s)^2\e^{c_2(f)s}\int_0^s\dot{h}_1(u)^2 \e^{c_2(f)u}\,du ds\\
&=\frac{3}{2}f^{-4}(x) \l(\int_0^t\dot{h}_1(s)^2\e^{c_2(f)s}\,ds\r)^2.
\end{align*}
Combining this estimate and inequality \eqref{esti-w1}  with inequality \eqref{eq1} yields
\begin{align*}
&\big|(\Hess \,u)_x(v,v)\big|\\
& \leq \sqrt{\|u\|_D\,u(x)}\sqrt{\l((\delta_1+1)K_1^2+\frac{\delta_2}{2}K_2^2\r)\,\lim_{t\rightarrow \infty}\int_0^{t}\dot{h}_1(s)^2  \e^{c_1(f)s}\int_0^s\e^{(2K_0^--c_1(f)) r}\,dr\,ds}\\
&\quad +2f^{-2}(x)\sqrt{\|u\|_D\,u(x)}\sqrt{3\lim_{t\rightarrow \infty}\int_0^{t}\dot{h}_1(s)^2\e^{c_2(f)s}\int_0^s\dot{h}_1(u)^2 \e^{c_2(f)u}\,du\,ds}\\
&\leq \sqrt{\|u\|_D\,u(x)}\sqrt{\l((\delta_1+1)K_1^2+\frac{\delta_2}{2}K_2^2\r)\,\lim_{t\rightarrow \infty}\int_0^{t}\dot{h}_1(s)^2  \e^{c_1(f)s}\int_0^s\e^{(2K_0^--c_1(f)) r}\,dr\,ds}\\
&\quad +\sqrt{6}f^{-2}(x)\sqrt{\|u\|_D\,u(x)}\,\lim_{t\rightarrow \infty}\bigg(\int_0^{t}\dot{h}_1(s)^2\e^{c_2(f)s}\,ds\bigg).
\end{align*}
It now suffices to choose a suitable function $h_1$.
By Lemma \ref{lem:est}, we have 
\begin{align*}
\e^{c_1(f)s}\int_0^s\e^{(2K_0^--c_1(f)) r}\,dr\leq \e^{c_1s}\int_0^s\e^{(2K_0^--c_1) r}\,dr\leq \e^{(c_1+2K_0^-)s}\int_0^s\e^{-c_1 r}\,dr \leq \frac{\e^{(c_1+2K_0^-)s}}{c_1}.
\end{align*}
We define
\begin{align*}
h_1(s)=\frac{\int_0^s\e^{-(c_1+2K^-_0)r}\,dr}{\int_0^t\e^{-(c_1+2K^-_0)r}\,dr}.
\end{align*}
If $\delta_1\leq \frac{1}{2}$, then it is easy to see that 
$
 c_2(f)\leq c_2\leq c_1
$
and $c_1\geq K_0^-$, 
\begin{align*}
|(\Hess \,u)_x(v,v)|
&\leq \sqrt{\|u\|_D\,u(x)}\sqrt{\l((\delta_1+1)K_1^2+\frac{\delta_2}{2}K_2^2\r)\frac{c_1+2K_0^-}{c_1}}\\
&\quad +f^{-2}(x)\sqrt{6\|u\|_D\,u(x)}(c_1+2K_0^-)\\
&\leq \sqrt{\|u\|_D\,u(x)}\sqrt{(1+2{1}_{K_0\neq 0})\l((\delta_1+1)K_1^2+\frac{\delta_2}{2}K_2^2\r)}\\
&\quad +\sqrt{6\|u\|_D\,u(x)}\,\l((2\delta_2)^{-1}+3K_0^-+\frac{\pi \sqrt{(n-1)K^-_0}}{2 \delta_x} + \frac{\pi^2 (n+3+\delta_1^{-1})}{4\delta_x^2}\r)
\end{align*}
for $0<\delta_1\leq \frac{1}{2}$ and $\delta_2>0$. We complete the proof of inequality \eqref{EHF-1} by taking the infimum with respect to $\delta_1$ and $\delta_2$.
\end{proof}

\begin{proof}[Proof of Theorem \ref{them-1} (Part II)] 
  Theorem \ref{them-harmonic} allows to control $|\Hess\,u(v,v)|$ directly
  in terms of $\|u\|_D$ as follows: 
\begin{align}
|(\Hess\,u)_x(v,v)|&=\lim_{t\rightarrow \infty}|(\Hess \,P^D_tu)_x(v,v)|\notag\\
& \leq \|u\|_D\sqrt{\E^x\left[\lim_{t\rightarrow \infty}\int_0^{\tau(t)}|W_s^k(v,v)|^2\,\dot{k}(s)^2\,ds\right]}\notag\\
&\quad +2\|u\|_D\,\E^x\l[\lim_{t\rightarrow \infty}\int_0^{\tau(t)}\|Q_s\|^2\,\dot{k}(s)^2\,ds\r]\notag\\
& \leq \|u\|_D\sqrt{\E^x\left[\lim_{t\rightarrow \infty}\int_0^{\tau(t)}|W_s^k(v,v)|^2\,\dot{k}(s)^2\,ds\right]}\notag\\
&\quad +2\|u\|_D\,\E^x\l[\lim_{t\rightarrow \infty}\int_0^{\tau(t)}\e^{-K_0s}\dot{k}(s)^2\,ds\r]={\rm I}+{\rm II}.\label{Hess-WQ}
\end{align}
For the second term ${\rm II}$ we observe
\begin{align*}
\E^x\l[\int_0^{\tau(t)}\e^{-K_0s}\dot{k}(s)^2\,ds\r]=\int_0^{t}\dot{h}_1(s)^2\E^x\big[\e^{-K_0\tau(s)}f^{-2}(X_{\tau(s)})\big]\,ds;
\end{align*}
the first term has been dealt with in  \eqref{W-esti} above. We conclude from \eqref{Hess-WQ} that
\begin{align*}
\big|(\Hess \,u)_x(v,v)\big|
& \leq \|u\|_D\sqrt{\lim_{t\rightarrow \infty}\int_0^{t}\dot{h}_1(s)^2\,\E^x\l[|W^k_{\tau(s)}(v,v)|^2f^{-2}(X_{\tau(s)})\r]\,ds}\\
&\quad +2\|u\|_D\,\lim_{t\rightarrow \infty}\int_0^{t}\dot{h}_1(s)^2\,\E^x\big[\e^{-K_0\tau(s)}f^{-2}(X_{\tau(s)})\big]\,ds.
\end{align*}
Recall that by Lemma \ref{lem:est}, 
\begin{align*}
c_3(f)&=\sup_D\l\{K_0^-f^2+3|\nabla f|^2-(f\Delta f)\r\}\\
&\leq K_0^-+\frac{\pi \sqrt{(n-1)K^-_0}}{2 \delta_x} + \frac{\pi^2 (n+3)}{4\delta_x^2}=:c_3<c_1.
\end{align*}
Thus 
\begin{align*}
\E^x\big[f^{-2}(X_{\tau(s)})\e^{-K_0\tau(s)}\big]\leq f^{-2}(x)\e^{c_3(f)s}\leq \e^{c_3s},
\end{align*}
and hence
\begin{align}
\big|(\Hess \,u)_x(v,v)\big|
& \leq \sqrt{\l((\delta_1+1)K_1^2+\frac{\delta_2}{2}K_2^2\r)\l(\int_0^{t}\dot{h}_1(s)^2 \e^{c_1s}\int_0^s\e^{(2K_0^--c_1) r}\,dr\,ds\r)}\,\|u\|_D\notag\\
&\qquad +2\l(\int_0^{t}\dot{h}_1(s)^2 \e^{c_3s}\,ds\r)\|u\|_D.\label{Hess-1}
\end{align}
Define
\begin{align*}
h_1(s)=\frac{\int_0^s\e^{-(c_1+2K_0^-)r}\,dr}{\int_0^t\e^{-(c_1+ 2K_0^-)r}\,dr},
\end{align*}
so that
\begin{align*}
\dot{h}_1(s)=\frac{\e^{-(c_1+2K_0^-)s}}{\int_0^t\e^{-(c_1+ 2K_0^-)r}dr}.
\end{align*}
The estimate 
\begin{align*}
\e^{c_1s}\int_0^s\e^{(2K_0^--c_1) r}\,dr\leq \e^{(c_1+2K_0^-)s}\int_0^s\e^{-c_1 r}\,dr \leq \frac{\e^{(c_1+2K_0^-)s}}{c_1}
\end{align*}
is immediate. Since $ c_3(f)\leq c_3\leq c_1$, we have
\begin{align*}
\int_0^t \frac{\e^{-2(c_1+2K_0^-)s}\e^{c_3s}}{\l(\int_0^t\e^{-(c_1+ 2K_0^-)r}dr\r)^2} \,ds&\leq \int_0^t \frac{\e^{-(c_1+2K_0^-)s}}{\l(\int_0^t\e^{-(c_1+ 2K_0^-)r}dr\r)^2} \,ds\\
&\leq \frac{1}{\int_0^t\e^{-(c_1+ 2K_0^-)r}dr}= \frac{c_1+ 2K_0^-}{1-\e^{-(c_1+ 2K_0^-)t}}.
\end{align*}
Using these estimates to bound \eqref{Hess-1} from above and letting $t$ tend to $\infty$, we deduce
\begin{align*}
|(\Hess \,u)_x(v,v)|
& \leq \sqrt{\l((\delta_1+1)K_1^2+\frac{\delta_2}{2}K_2^2\r)\l( \frac{c_1+2K_0^-}{c_1}\r)}\,\|u\|_D+2\l(c_1+2K_0^-\r)\|u\|_D\\
& \leq \sqrt{(1+2\1_{\{K_0^-\neq 0\}})\l((\delta_1+1)K_1^2+\frac{\delta_2}{2}K_2^2\r)}\,\|u\|_D\\
&\quad +2\l((2\delta_2)^{-1}+3K_0^-+\frac{\pi \sqrt{(n-1)K^-_0}}{2 \delta_x} + \frac{\pi^2 (n+3+\delta_1^{-1})}{4\delta_x^2}\r){\|u\|_D}
\end{align*}
for $\delta_1>0$ and $\delta_2>0$. This completes the proof of inequality \eqref{EHF-2}.
\end{proof}

\section{Estimate of the Hessian of the semigroup}

We are now going to apply our results to give explicit local Hessian estimates
for the heat semigroup on a Riemannian manifold.

\begin{remark}
Local Hessian estimates for the semigroup have been derived in
\cite[Section 4.2]{Holger} by using the Hessian formula in \cite{APT}.
We can improve these results significantly by clarifying the coefficients.
\end{remark}

\begin{theorem}\label{th-esti-HS}
Let $x\in M$ and $D$ be a relatively compact open domain such that $x\in D$.
Then for  $f\in \mathcal{B}_b(M)$,
\begin{align*}
|\Hess P_Tf|(x)
\leq  \inf_{\delta>0}\l\{\l(\frac{T}{2}\sqrt{ K_1^2+\frac{K_2^2}{\delta }}+\frac{2}{ T}\r)\exp\l(T\l(K_0^-+ \frac{\delta}{2}+\frac{\pi \sqrt{(n-1)K^-_0}}{2 \delta_x} + \frac{\pi^2 (n+3)}{4\delta_x^2}\r)\r)\r\}\|f\|_{D}.
\end{align*}
\end{theorem}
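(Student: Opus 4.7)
The plan is to apply the localized Bismut--Stroock formula of Theorem~\ref{th4} inside the ball $B=B(x,\delta_x)\subset D$, using a random Cameron--Martin function~$k$ built from the time-change construction of Section~\ref{HF-section}, and to estimate each of the two terms in the equivalent formulation~\eqref{Hessian-formula-local-2}. Fix $v\in T_xM$ with $|v|=1$. Choose $f(y)=\sin\!\bigl(\pi\rho(y,\partial B)/(2\delta_x)\bigr)$ on $B$, set $h_0(s)=\int_0^s f^{-2}(X_r)\,dr$ with inverse time change $\sigma(t)=h_0^{-1}(t)$ (note $\sigma(t)\le t$ since $f\le 1$), and put $k(s)=(1-h_0(s)/T)_+$. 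Then $k(0)=1$, $k(s)=0$ for $s\ge\sigma(T)$, and $\dot k(s)=-T^{-1}f^{-2}(X_s)\1_{\{s<\sigma(T)\}}$. Applying Theorem~\ref{th4} with $\tau=\sigma(T)\wedge\tau_B$, and using $|f|\le\|f\|_D$ on $\{T<\zeta\}$, the bound $\E\bigl[|\!\int\!\phi\,dB|\bigr]\le\sqrt{\E[\!\int|\phi|^2\,ds]}$ on the first term of \eqref{Hessian-formula-local-2}, and the identity $2\!\int_0^{\cdot}\!M\,dM=M^2-[M]$ together with It\^o's isometry on the second, one obtains
\[
|(\Hess P_Tf)(v,v)|\;\le\;\|f\|_D\,\bigl(A+2B\bigr),
\]
with $A=\sqrt{\E\bigl[\!\int_0^{\sigma(T)}\!|W^k_s(v,\dot k(s)v)|^2\,ds\bigr]}$ and $B=\E\bigl[\!\int_0^{\sigma(T)}\!|Q_s\dot k(s)v|^2\,ds\bigr]$.

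For $B$, time-changing $s=\sigma(u)$, so $ds=f^2(X_{\sigma(u)})\,du$, and invoking the It\^o estimate $\E\bigl[\e^{-K_0\sigma(u)}f^{-2}(X_{\sigma(u)})\bigr]\le\e^{c_3(f)u}$ (derived exactly as in the proof of Theorem~\ref{them-1}, Part~II) together with Lemma~\ref{lem:est} yields $B\le\e^{c_3 T}/T$, which produces the $(2/T)\exp(T(c_3-K_0^-))\|f\|_D$ contribution after factoring out the common exponential $\exp\!\bigl(T(K_0^-+\delta/2+c_3-K_0^-)\bigr)$. For $A$, I would apply Lemma~\ref{th3} with $p=1$, $q=\infty$ in combination with the $f^{-2}$-weighted It\^o evolution of $|W^k_{\sigma(u)}|^2$ from the proof of Theorem~\ref{them-1}, Part~I, collapsing the two Young parameters $\delta_1,\delta_2$ into the single $\delta$ of the statement; after the time change and the integration against $\dot h_1^{\,2}=1/T^2$, this yields $A\le\tfrac{T}{2}\sqrt{K_1^2+K_2^2/\delta}\exp\!\bigl(T(K_0^-+\delta/2+c_3-K_0^-)\bigr)$. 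Summing $A+2B$, factoring out the common exponential, and taking the infimum over $\delta>0$ gives the claim.

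The chief technical obstacle is the coefficient bookkeeping in the $A$ estimate: the natural $f^{-2}$-weighted It\^o computation produces the two-parameter structure $\sqrt{(\delta_1{+}1)K_1^2+\tfrac{\delta_2}{2}K_2^2}$ together with a $\pi^2/\delta_1$ correction inside $c_1(f)$, whereas the statement asks for the cleaner one-parameter form $\sqrt{K_1^2+K_2^2/\delta}$ with no $\delta_1$-term inside the geometric constant $c_3$. Reconciling these requires either a H\"older decoupling of $|W^k_{\sigma(u)}|^2$ from the weight $f^{-2}(X_{\sigma(u)})$ (using $\E[f^{-4}(X_{\sigma(u)})]\le\e^{2c_3u}$, obtained by the same It\^o argument as for $\E[f^{-2}]$), or an unweighted It\^o evolution of $|W^k_s|^2$ in which the geometric cost is absorbed solely through the $\dot k(s)^2=T^{-2}f^{-4}(X_s)$ factor of the $A$ integrand.
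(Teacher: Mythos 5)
Your overall plan is the paper's: localize to $B(x,\delta_x)$, apply Theorem~\ref{th4} with the time-changed Cameron--Martin process $k$, split into the $W^k$-term and the $Q$-term, and control the latter via the $\e^{-K_0\tau(s)}f^{-2}(X_{\tau(s)})$ estimate yielding $c_3$. The $B$ (i.e.\ $Q$) estimate is correct and essentially matches the paper, which obtains $\e^{(K_0^-+\tilde c_1)T}/T$ via $|Q_s|^2\le\e^{K_0^- T}$ and Lemma~\ref{lem-1} with $q=1$.

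There is, however, a genuine gap in the $A$ (i.e.\ $W^k$) term, and you have half-identified it yourself. Applying Lemma~\ref{th3} ``with $p=1$, $q=\infty$'' is vacuous here: with the time-change construction $\dot k(s)=-T^{-1}f^{-2}(X_s)\1_{\{s<\sigma(T)\}}$, the derivative $\dot k$ is unbounded (it blows up as $X$ nears $\partial B$), so the factor $\bigl(\E[\int_0^T|\dot k|^{2q}\,dt]\bigr)^{1/q}$ tends to $+\infty$ rather than stabilizing, and the lemma gives nothing at the endpoint. Your fallbacks do not close the gap either: the $f^{-2}$-weighted It\^o evolution (Part~I of Theorem~\ref{them-1}) necessarily produces the geometric constant $c_1(f)$ with its $\delta_1^{-1}|\nabla f|^2$ term and hence a $\pi^2\delta_1^{-1}/(4\delta_x^2)$ contribution that cannot be discarded, while the Cauchy--Schwarz decoupling $\E[f^{-2}|W^k|^2]\le\sqrt{\E[f^{-4}]}\sqrt{\E[|W^k|^4]}$ pulls in $p=2$ moments of $W^k$ and hence a worse constant than $\sqrt{K_1^2+K_2^2/\delta}$.

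The paper's resolution is to keep $p>1$, $q<\infty$ throughout: apply the H\"older split of Lemma~\ref{th3} to separate $\bigl(\E[\int_0^T|W^k_t|^{2p}\,dt]\bigr)^{1/p}$ from $\bigl(\E[\int_0^T|\dot k|^{2q}\,dt]\bigr)^{1/q}$, bound the latter via Lemma~\ref{lem-1} (giving $\e^{\tilde c_q T}/T$), and then observe that $\tilde c_q/(2q)$ is decreasing in $q$ with $\tilde c_q/(2q)\le\tilde c_1/2$; this keeps the exponent under control as $q\to\infty$. Only at the very end does one send $p\to 1$ through the whole chain of inequalities, which recovers $\tfrac12\sqrt{K_1^2+K_2^2/\delta}\,T$ in the prefactor and absorbs the $\tilde c_1/2$ into the exponent stated in the theorem. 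This monotonicity-and-limit step is the missing ingredient in your proposal, and without it neither of your suggested routes yields the stated bound.
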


\begin{proof}
By Theorem \ref{th4} and Cauchy's inequality, we have
\begin{align}
\big|(\Hess {P_Tf})_x(v,v)\big|&\leq  \|f\|_{D}\l(\E^x\l[\int_0^T|W^k_s(v,\dot{k}(s)v)|^2\,ds\r]\r)^{1/2}+2\|f\|_{D}\l(\E^x\int_0^T|Q_s(\dot{k}(s)v)|^2\, ds\r)\notag\\
&\leq  \|f\|_D\sqrt{\E^x\left[\int_0^{\tau(T)}|W^k_s(v,v)|^2\,\dot{k}(s)^2\,ds\right]}+2\|f\|_D\,\E^x\l[\int_0^{\tau(T)}|Q_s|^2\,\dot{k}(s)^2\, ds\r]={\rm I}+{\rm II}.\label{HessPTf}
\end{align}
Using a similar argument as in the proof of Theorem \ref{them-1} (Part II) and Lemma \ref{th3}, we obtain
\begin{align*}
\big|(&\Hess {P_Tf})_x(v,v)\big|\\
  &\leq  \|f\|_{D} \l(\frac{(2p-1) ^{p}K_1^{2p}}{2\delta^{p-1}}+\frac{K_2^{2p}}{2\delta^{2p-1} }\r)^{1/(2p)}\e^{(K_0^-+\frac{1}{2}\delta)T}T^{1/p}\,\l[\E^x\int_0^T|\dot{k}(t)|^{2q}\,dt\r]^{1/(2q)}\\
  &\quad +2\|f\|_{D}\,\E^x\l[\int_0^T\e^{-K_0s}\dot{k}(s)^2\, ds\r]\\
&\leq \|f\|_{D} \l(\frac{(2p-1) ^{p}K_1^{2p}}{2^2\delta^{p-1}}+\frac{K_2^{2p}}{2^2\delta^{2p-1} }\r)^{1/(2p)}\e^{\big(K_0^-+\frac{1}{2}\delta +\frac{\tilde{c}_q}{2q}\big)T}T^{3/(2p)-1/2}\\
&\quad +2\|f\|_{D}\e^{K_0^-T}\E^x\l[\int_0^T\dot{k}(s)^{2}\,ds\r]\\
&\leq  \|f\|_{D} \l(\frac{(2p-1) ^{p}K_1^{2p}}{2^2\delta^{p-1}}+\frac{K_2^{2p}}{2^2\delta^{2p-1} }\r)^{1/(2p)}\e^{\big(K_0^-+\frac{1}{2}\delta +\frac{\tilde{c}_1}{2}\big)T}T^{(3-p)/2p} +\|f\|_{D}\e^{(K_0^-+\tilde{c}_1)T}\frac{2}{T}
\end{align*}
for any $\delta>0$. 
We complete the proof by substituting $\tilde{c}_1$ defined in Lemma \ref{th3} into the above inequality and letting $p \rightarrow 1$.
\end{proof}

For global results (i.e.~the case $D=M$), if $M$ is compact, Theorem \ref{th4}
holds with $\tau\equiv\infty$ and the Cameron-Martin valued process $k$
can be chosen deterministic and linear, which leads immediately to straightforward estimates.
If the manifold is non-compact, $\Ric\geq K_0$, $\|R\|_{\infty}:=\sup_{x\in M}|R|(x)<\infty$ and
\begin{align*}
&K_2:=\sup\left\{|({\bf d}^* R+\nabla \Ric)^{\sharp}(v,v)|\colon v,w\in T_xM,\ x\in M,\ |v|=1,\, |w|=1\right\}<\infty,
\end{align*}
then Theorem \ref{th-esti-HS} still holds when replacing $D$ by $M$ and letting $\delta_x\rightarrow \infty$.
In the following, we use a different argument by choosing $k\in C^1([0,T])$ such that $k(0)=0$ and $k(T)=1$ and estimating $\E^x|W^k_s(v,v)|^2$.

\begin{corollary}\label{cor1}
Let $M$ be a non-compact manifold. Assume $\Ric\geq K_0$ and $K_1:=\|R\|_{\infty}<\infty$,
\begin{align*}
&K_2=\sup\left\{|({\bf d}^* R+\nabla \Ric)^{\sharp}(v,v)|\colon v,w\in T_xM,\ |v|=1,\, |w|=1\right\}<\infty.
\end{align*}
Then,
\begin{align}\label{Eq:EstHessPTf}
|\Hess P_Tf|
\leq \l(\l(\frac{K_1^2}{2}+\frac{K_2^2}{2\delta }\r)^{1/2}+\frac{2}{T}\r)\exp\l(\l(K_0^-+ \frac{1}{2}\delta\r)T\r)\|f\|_{\infty}
\end{align}
for  $\delta>0$.
Moreover, if the manifold $M$ is Ricci parallel, i.e. $\nabla \Ric =0$, then 
\begin{align}\label{Eq:EstRicciFlat}
|\Hess P_Tf|
\leq  \l(\frac{K_1^2}{\sqrt{2}}+\frac{2}{T}\r)\e^{K_0^-T}\|f\|_{\infty}.
\end{align}
\end{corollary}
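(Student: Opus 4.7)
The plan is to specialize Theorem~\ref{th4} to the global setting. Since $\Ric\geq K_0$ ensures non-explosion, the Brownian motion is conservative and we may replace $D$ by $M$ and take $\tau\equiv+\infty$; because no random cutoff is needed, the test function $k\in C^1([0,T];[0,1])$ can be chosen deterministic, and the simplest valid choice is $k(t)=1-t/T$, so $\dot k\equiv -1/T$.

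Applying $|f|\leq\|f\|_\infty$ to \eqref{Hessian-formula-local}, Cauchy--Schwarz plus the It\^o isometry for the $W^k$-integral, and the bound $\E|(\int\langle\cdot,dB\rangle)^2-\int|\cdot|^2\,ds|\leq 2\,\E\int|\cdot|^2\,ds$ for the second line, I get
\begin{align*}
|(\Hess P_Tf)_x(v,v)|\leq \|f\|_\infty\Big(\E\!\int_0^T\!|W^k_s(v,\dot k(s)v)|^2\,ds\Big)^{1/2}+2\|f\|_\infty\,\E\!\int_0^T\!|Q_s(\dot k(s)v)|^2\,ds.
\end{align*}
The $Q$-integral reduces via $|Q_s|\leq e^{-K_0 s/2}$ and $|\dot k|^2=1/T^2$ to at most $(2/T)e^{K_0^- T}\|f\|_\infty$. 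For the $W^k$-integral, taking $p=1$ in the It\^o calculation behind the proof of Lemma~\ref{th3} yields the pointwise bound $\E|W^k_s(v,v)|^2\leq (K_1^2/2+K_2^2/(2\delta))\,s\,e^{(2K_0^-+\delta)s}$ for any $\delta>0$; multiplying by $1/T^2$, integrating via $\int_0^T s\,e^{\alpha s}\,ds\leq T^2 e^{\alpha T}$, and taking a square root gives the contribution $(K_1^2/2+K_2^2/(2\delta))^{1/2}e^{(K_0^-+\delta/2)T}$. Combining the two contributions produces \eqref{Eq:EstHessPTf}.

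When $\nabla\Ric\equiv 0$, the identity for $\bd^*R$ recalled just after \eqref{Eq:HessianSemigr} forces $\bd^*R+\nabla\Ric\equiv 0$, so $K_2=0$; the $K_2$-term in the It\^o computation then drops out and one obtains the sharper pointwise estimate $\E|W^k_s(v,v)|^2\leq K_1^2\,s\,e^{2K_0^- s}$, essentially a pointwise-in-$s$ version of \eqref{Eq:EstWt}. The same integration with $|\dot k|^2=1/T^2$ then yields $\E\int_0^T|W^k_s(v,\dot k(s)v)|^2\,ds\leq (K_1^2/2)e^{2K_0^- T}$, whose square root combined with the $(2/T)e^{K_0^- T}$ contribution from the $Q$-term produces \eqref{Eq:EstRicciFlat}. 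Conceptually the whole argument is a direct specialization of Theorem~\ref{th4}; the only real work is the careful constant tracking in the It\^o computation, which is the main source of potential error rather than any structural obstacle.
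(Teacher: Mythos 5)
Your proposal is correct and follows essentially the same route as the paper's own proof: both specialize Theorem~\ref{th4} to the global setting (using $\Ric\geq K_0$ for stochastic completeness so that $\tau\equiv\infty$ and $k$ can be deterministic), take the linear test function $k(s)=(T-s)/T$, bound the $Q$-term via $|Q_s|\leq e^{-K_0 s/2}$, and control the $W^k$-term by the $p=1$ instance of the estimate in Lemma~\ref{th3} (the paper cites \eqref{Weq} directly, while you reproduce the underlying pointwise bound $\E|W^k_s(v,v)|^2\leq(K_1^2/2+K_2^2/(2\delta))\,s\,e^{(2K_0^-+\delta)s}$, which is the same thing). The Ricci-parallel case is handled identically, by noting $\bd^*R+\nabla\Ric\equiv 0$ so $K_2=0$, and your constant $K_1/\sqrt2$ for that term is in fact what the computation gives (the paper's displayed $K_1^2/\sqrt2$ in \eqref{Eq:EstRicciFlat} appears to be a typo).
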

\begin{proof}
From \eqref{HessPTf}, we know that for $k\in C^1([0,T])$ such that $k(0)=0$ and $k(T)=1$,
\begin{align*}
\big|(\Hess {P_Tf})_x(v,v)\big|&\leq  \|f\|_{\infty}\l(\int_0^{T}\E^x|W^k_s(v,v)|^2\,\dot{k}(s)^2\,ds\r)^{1/2}+2\|f\|_{\infty}\,\E^x\l[\int_0^{T}|Q_s|^2\,\dot{k}(s)^2\, ds\r].
\end{align*}
If we choose $k(s)=(T-s)/T$, then it follows from \eqref{Weq} that
\begin{align*}
\big|(\Hess {P_Tf})_x(v,v)\big|&\leq  \l(\frac{K_1^2}{2}+\frac{K_2^2}{2\delta}\r)^{1/2}\e^{(K_0^-+\frac{\delta}{2})T}\|f\|_{\infty}+\frac{2}{T^2}\,\E^x\l[\int_0^{T}|Q_s|^2\, ds\r]\|f\|_{\infty}\\
&\leq \l(\l(\frac{K_1^2}{2}+\frac{K_2^2}{2\delta }\r)^{1/2}+\frac{2}{T}\r)\exp\l(\l(K_0^-+ \frac{1}{2}\delta\r)T\r)\|f\|_{\infty}
\end{align*}
If the manifold is Ricci parallel, then ${\bf d}^* R+\nabla \Ric=0$
and $\delta$ is not needed in the estimate of $\E^x|W_t(v,v)|^2$, see
inequality \eqref{est-Ws}. Thus, in this case, estimate
\eqref{Eq:EstHessPTf} reduces to~\eqref{Eq:EstRicciFlat}.
\end{proof}


\bibliographystyle{amsplain}%

\bibliography{New-Hessian}

\providecommand{\bysame}{\leavevmode\hbox to3em{\hrulefill}\thinspace}
\providecommand{\MR}{\relax\ifhmode\unskip\space\fi MR }
\providecommand{\MRhref}[2]{%
  \href{http://www.ams.org/mathscinet-getitem?mr=#1}{#2}
}
\providecommand{\href}[2]{#2}
\begin{thebibliography}{10}

\bibitem{ADT2007}
Marc Arnaudon, Bruce~K. Driver, and Anton Thalmaier, \emph{Gradient estimates
  for positive harmonic functions by stochastic analysis}, Stochastic Process.
  Appl. \textbf{117} (2007), no.~2, 202--220. \MR{2290193}

\bibitem{APT}
Marc Arnaudon, Holger Plank, and Anton Thalmaier, \emph{A {B}ismut type formula
  for the {H}essian of heat semigroups}, C. R. Math. Acad. Sci. Paris
  \textbf{336} (2003), no.~8, 661--666. \MR{1988128}

\bibitem{Bismut}
Jean-Michel Bismut, \emph{Large deviations and the {M}alliavin calculus},
  Progress in Mathematics, vol.~45, Birkh\"{a}user Boston, Inc., Boston, MA,
  1984. \MR{755001}

\bibitem{CCT}
Jun Cao, Li-Juan Cheng, and Anton Thalmaier, \emph{Hessian heat kernel
  estimates and {C}alder\'{o}n-{Z}ygmund inequalities on complete {R}iemannian
  manifolds}, arXiv:2108.13058 (2021).

\bibitem{CLW}
Xin Chen, Xue-Mei Li, and Bo~Wu, \emph{Logarithmic heat kernels: estimates
  without curvature restrictions}, arXiv:2106.02746v1 (2021).

\bibitem{CTT18}
Li-Juan Cheng, Anton Thalmaier, and James Thompson, \emph{Quantitative
  {$C^1$}-estimates by {B}ismut formulae}, J. Math. Anal. Appl. \textbf{465}
  (2018), no.~2, 803--813. \MR{3809330}

\bibitem{CJKS-2020}
Thierry Coulhon, Renjin Jiang, Pekka Koskela, and Adam Sikora, \emph{Gradient
  estimates for heat kernels and harmonic functions}, J. Funct. Anal.
  \textbf{278} (2020), no.~8, 108398, 67. \MR{4056992}

\bibitem{EL94}
K.~David Elworthy and Xue-Mei Li, \emph{Formulae for the derivatives of heat
  semigroups}, J. Funct. Anal. \textbf{125} (1994), no.~1, 252--286.
  \MR{1297021}

\bibitem{Li}
Xue-Mei Li, \emph{Stochastic differential equations on noncompact manifolds},
  University of Warwick, Thesis (1992).

\bibitem{Li2018}
\bysame, \emph{Doubly damped stochastic parallel translations and {H}essian
  formulas}, Stochastic partial differential equations and related fields,
  Springer Proc. Math. Stat., vol. 229, Springer, Cham, 2018, pp.~345--357.
  \MR{3828180}

\bibitem{Li2016}
\bysame, \emph{Hessian formulas and estimates for parabolic {S}chr\"{o}dinger
  operators}, J. Stoch. Anal. \textbf{2} (2021), no.~3, Art. 7, 53.
  \MR{4304478}

\bibitem{Holger}
Holger Plank, \emph{Stochastic representation of the gradient and {H}essian of
  diffusion semigroups on {R}iemannian manifolds}, PhD thesis, Universit\"{a}t
  Regensburg (2002).

\bibitem{Schoen96}
Richard Schoen, \emph{The effect of curvature on the behavior of harmonic
  functions and mappings}, Nonlinear partial differential equations in
  differential geometry ({P}ark {C}ity, {UT}, 1992), IAS/Park City Math. Ser.,
  vol.~2, Amer. Math. Soc., Providence, RI, 1996, pp.~127--184. \MR{1369588}

\bibitem{Stroock}
Daniel~W. Stroock, \emph{An estimate on the {H}essian of the heat kernel},
  It\^{o}'s stochastic calculus and probability theory, Springer, Tokyo, 1996,
  pp.~355--371. \MR{1439536}

\bibitem{Stroock97}
Daniel~W. Stroock and James Turetsky, \emph{Short time behavior of logarithmic
  derivatives of the heat kernel}, Asian J. Math. \textbf{1} (1997), no.~1,
  17--33. \MR{1480989}

\bibitem{Thalmaier97}
Anton Thalmaier, \emph{On the differentiation of heat semigroups and {P}oisson
  integrals}, Stochastics Stochastics Rep. \textbf{61} (1997), no.~3-4,
  297--321. \MR{1488139}

\bibitem{TW98}
Anton Thalmaier and Feng-Yu Wang, \emph{Gradient estimates for harmonic
  functions on regular domains in {R}iemannian manifolds}, J. Funct. Anal.
  \textbf{155} (1998), no.~1, 109--124. \MR{1622800}

\bibitem{Th-Wang:2011}
\bysame, \emph{A stochastic approach to a priori estimates and {L}iouville
  theorems for harmonic maps}, Bull. Sci. Math. \textbf{135} (2011), no.~6-7,
  816--843. \MR{2838103}

\bibitem{Thompson2019}
James Thompson, \emph{Derivatives of {F}eynman-{K}ac semigroups}, J. Theoret.
  Probab. \textbf{32} (2019), no.~2, 950--973. \MR{3959634}

\bibitem{Wbook1}
Feng-Yu Wang, \emph{Analysis for diffusion processes on {R}iemannian
  manifolds}, Advanced Series on Statistical Science \& Applied Probability,
  vol.~18, World Scientific Publishing Co. Pte. Ltd., Hackensack, NJ, 2014.
  \MR{3154951}

\bibitem{Yau75}
Shing~Tung Yau, \emph{Harmonic functions on complete {R}iemannian manifolds},
  Comm. Pure Appl. Math. \textbf{28} (1975), 201--228. \MR{431040}

\end{thebibliography}

\end{document}